\theoremstyle{thmstyleone}%
\newtheorem{theorem}{Theorem}
\theoremstyle{thmstyletwo}%
\newtheorem{remark}{Remark}%
\theoremstyle{thmstylethree}%
\theoremstyle{plain}
\newtheorem{cor}{Corollary}[theorem]
\newtheorem{lemma}{Lemma}[section]
\theoremstyle{definition}
\begin{document}

\title[]{Subgroup perfect codes of $S_n$ in Cayley sum graphs}


\author[]{\fnm{Ankan} \sur{Shaw}}\email{ankanf22@gmail.com}
\equalcont{These authors contributed equally to this work.}

\author[]{\fnm{Biswajit} \sur{Mondal}}\email{mbabay2018@gmail.com}
\equalcont{These authors contributed equally to this work.}

\author[]{\fnm{Satya} \sur{Bagchi}}\email{sbagchi.maths@nitdgp.ac.in}
\equalcont{These authors contributed equally to this work.}

\equalcont{These authors contributed equally to this work.}

\affil[]{\orgdiv{Department of Mathematics}, \orgname{National Institute of Technology Durgapur}, \orgaddress{\city{Durgapur}, \postcode{713209}, \state{West Bengal}, \country{INDIA}}}




\abstract{A perfect code in a graph $\Gamma = (V, E)$ is a subset $C$ of $V$ such that no two vertices in $C$ are adjacent, and every vertex in $V \setminus C$ is adjacent to exactly one vertex in $C$. Let $ G $ be a finite group, and let $ S $ be a square-free normal subset of $ G $. The Cayley sum graph of $ G $ with respect to $ S $ is a simple graph with vertex set $ G $ and two vertices $ x $ and $ y $ are adjacent if $ xy\in S .$ A subset $ C $ of $ G $ is called perfect code of $ G $ if there exists a Cayley sum graph of $ G $ that admits $ C $ as a perfect code. In particular, if a subgroup of $ G $ is a perfect code of $ G $, then the subgroup is called a subgroup perfect code of $ G $. In this work, we prove that there does not exist any proper perfect subgroup code of symmetric group $ S_n $. Using this result, we provide a complete characterization of the perfect subgroup code of the alternating group $A_n$.}


\keywords{Cayley sum graph, Perfect code, Subgroup perfect code, Symmetric group, Alternating group}

\pacs[MSC Classification]{05C25, 05C69, 94B99, 20B30, 20D06}

\maketitle

\section{Introduction}

Every group and every graph considered in the study are finite. In this work, we use conventional group-theoretic vocabulary and notation, which are available, for instance, in \cite{Huang2018, Suzuki19821}.

Let $\Gamma$ be a graph with edge set $E(\Gamma)$ and vertex set  $V(\Gamma)$. If there is no path in $\Gamma$ connecting two vertices, the distance in $\Gamma$ between them is equal to $ \infty$. Otherwise, it equals the shortest path length between the two vertices. 
Let $ t $ be a natural number and $ C $ be a subset of $V(\Gamma)$. If every vertex of $ \Gamma $ is at a distance no greater than $t$ from precisely one vertex in $C$, then $ C $ \cite{Chen2020,Krotov2020} is said to be a perfect $t$-code in $ \Gamma $. A perfect code is generally a perfect $1$-code. In equivalent terms, if $C$ is an independent set of $\Gamma$ and each vertex in $V(\Gamma) \setminus C$ is adjacent to precisely one vertex in $C$, then subset $C$ of $V(\Gamma)$ is a perfect code in $\Gamma$. A perfect code in a graph is also referred to as an efficient dominating set \cite{Dejter2003, Lee2001} of the graph.

Let $G$ be a finite group and $S$ be a normal subset of $G$ (i.e., $g^{-1}Sg = S$ for all $g \in G$). Note that in an abelian group, every subset is automatically normal. An element $g \in G$ is called a \emph{square} if there exists $h \in G$ such that $g = h^2$. A subset of $G$ is said to be \emph{square-free} if it contains no square elements.

Given a subset $S \subseteq G$, the \emph{Cayley sum graph} $CayS(G, S)$ is defined as the directed graph whose vertex set is $G$, with two vertices $x$ and $y$ connected by an edge if and only if $xy \in S$. When $S$ is normal, the graph becomes undirected since the condition $xy \in S$ is equivalent to $yx \in S$. Furthermore, if there exists $g \in G$ such that $g^2 \in S$, then $\{g,g\}$ forms a \emph{semi-edge} (an edge with a single endpoint). The graph is called \emph{simple} if it contains neither multiple edges nor semi-edges.

Initially, Biggs \cite{Biggs1973}  made known the concept of perfect $t$-codes in a graph. He defined perfect $t$-codes under the Hamming and Lee metrics as a generalization. Perfect codes in Cayley graphs have been extensively studied; see \cite{Huang2018} for a survey of early results and \cite{Ma, Zhang2021, Zhang2022, Zhang2023, Zhang24} for recent advances in this area. In \cite{MacWilliams1977}, the authors showed that the Hamming distance of two code words of length $n$ over an alphabet of size $ m > 1 $ corresponds precisely to the graph distance in the Hamming graph $ H(n, m) $. Thus, under the Hamming metric, perfect $t$-codes in $H(n, m)$ are precisely those defined in the classical context. Similarly to this, the graph distance in the Cartesian product $L(n, m)$ of $n$ copies of the cycle of length $m$ equals the Lee distance \cite{Horak} between words of length $n$ across an alphabet of size $m > 2$.

Note that all graphs considered in this paper are simple. Thus, we always consider a simple Cayley sum graph $CayS(G, S)$, that is, the connecting set $S$ needs to be normal and square-free. More explicitly, for a square-free normal subset $S$ of $G$, the Cayley sum graph $CayS(G, S)$ of $G$ with respect to the connecting set $S$ is a simple graph with vertex set $G$ and two vertices $x$ and $y$ are adjacent if $xy\in S$.

The notion of Cayley sum graphs was initially introduced for abelian groups before being extended to arbitrary groups in \cite{Amo}. In $1989$, Chung \cite{Chung} first introduced the Cayley sum graphs of abelian groups. As pointed out in \cite{Gry}, the twins of the usual Cayley graphs, Cayley sum graphs are rather difficult to study, so that they received much less attention in the literature. Most results on Cayley sum graphs can be found in \cite{Amo, Che, ev, Kon, Lev, Ma2016}. In \cite{Ma2020}, authors studied perfect codes of Cayley sum graphs and define a subgroup perfect
code of a group by using Cayley sum graphs instead of Cayley graphs. More precisely, a
subgroup perfect code of a group G is a subgroup of G and a perfect code of some Cayley
sum graph of G. They gave a necessary and sufficient condition for a non-trivial subgroup of
an abelian group with non-trivial Sylow $2$-subgroup to be a subgroup perfect code of the
group. In very recently, J Zhang \cite{Zhang2024} gave a necessary conditions of a subgroup of a given group being a (total) perfect code in a Cayley sum graph of the group. In there, the Cayley sum graphs of some families of groups which admit a subgroup as a (total) perfect code are classified.

In this paper, we extend this study to the symmetric group $ S_n $, examining subgroup perfect codes in its Cayley sum graph. As a key application of our results, we further explore perfect subgroup codes in the alternating group $ A_n $. Our work provides a complete classification of subgroup perfect codes for both  $S_n$ and $ A_n $ in the Cayley sum graph setting, contributing new structural insights into coding theory in non-abelian groups.

Excluding the introductory part, the present work is demarcated into two sections.  Section \ref{sec2} lays the groundwork by presenting essential definitions, established results, and notations that form the basis of our study. Section \ref{sec3} contains our principal contributions, offering a complete classification of subgroup perfect codes of the symmetric group $S_n$ in the Cayley sum graph. Furthermore, we extend these results to examine subgroup codes of the alternating group $A_n$ in the Cayley sum graph, demonstrating how the structural properties uncovered for $ S_n $ naturally apply to its subgroup $A_n$ in the Cayley sum graph.

\section{Preliminaries}\label{sec2}
In this section, we introduce some definitions and preliminary results for the reader's convenience. Let $S_n$  and $ A_n $ denote the symmetric group  and alternating group on $ T = \{1,\,2,\dots,\,\,n\}$ with identity element $1_{S_n}$, where $ n>2$. For a subset $S \subseteq S_n$, we write $|S|$ for its cardinality and $\langle S \rangle$ for the subgroup generated by $S$. The order of a permutation $\sigma \in S_n$ is denoted by $o(\sigma)$, and $\text{cl}(\sigma)$ represents its conjugacy class. Let $ \mathcal{N}(\sigma) $ denote the number of disjoint transpositions in the cycle decomposition of $ \sigma.$ A permutation $\sigma$ is called an involution if $o(\sigma) = 2$. Let us define $ \Delta(\sigma) = \{ i\in T:~ \sigma(i)\neq i\} $, and $ I(H) = \{\sigma\in H: o(\sigma) = 2\} $ for any subgroup $ H $ of $ S_n $. Let $ t = (1\,\,2)(3\,\,4)\cdots(2k-1\,\,2k)$ be a product of $ k $ disjoint transpositions in $S_n$. For any subgroup $H$ of $S_n$, we define the following subsets:
\begin{align*}
     I_k(H) = cl(t)\cap H,\,\, \text{and}\,\,  I_{min}(H) =\min \{ k\in\mathbb{N}\cup \{0\}: I_k(H) \neq \emptyset \}.  
\end{align*}  

\noindent An element  $ \sigma \in S_n $ acts nontrivially on a transposition $ (a\,\,b)\in S_n $ if at least one of the following holds:
\begin{enumerate}[(i)]
    \item $ \sigma (a) \neq a $,
    \item $ \sigma (b) \neq b $. 
\end{enumerate}
More generally, if $ \Delta(\sigma) \cap \Delta (x) \neq \emptyset $, where $ x\in S_n$, then $\sigma$ acts nontrivially on $x$.
 
\begin{lemma}\label{kc1}\cite{Cornad}
For $n > 2$, $S_n$ is generated by the $n-1$ transpositions $ (1\,\,2), (2\,\,3), \dots, (n-1\,\, n)$.
\end{lemma}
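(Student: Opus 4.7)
The plan is to argue in two stages: first that all transpositions generate $S_n$, and then that every transposition already lies in the subgroup $H = \langle (1\,2), (2\,3), \dots, (n-1\,n) \rangle$.

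For the first stage, I would invoke the disjoint cycle decomposition: an arbitrary $\sigma \in S_n$ is a product of disjoint cycles, and each cycle admits the factorization $(a_1\,a_2\,\dots\,a_k) = (a_1\,a_2)(a_1\,a_3)\cdots(a_1\,a_k)$. Hence every element of $S_n$ is a product of transpositions, so it suffices to show that every transposition $(i\,j)$ with $1 \le i < j \le n$ belongs to $H$.

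For the second stage I would use induction on the gap $d = j - i$. The base case $d = 1$ is immediate since $(i\,i+1)$ is one of the chosen generators. For the inductive step, I would use the conjugation identity
\[
(i\,j) \;=\; (j-1\,j)\,(i\,j-1)\,(j-1\,j),
\]
which follows from the standard fact that conjugating a cycle by a permutation $\tau$ applies $\tau$ to each entry. By the inductive hypothesis $(i\,j-1) \in H$, and $(j-1\,j)$ is a generator of $H$, so the right-hand side lies in $H$, completing the induction.

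There is essentially no serious obstacle: both ingredients are classical and the only verification required is the short computation confirming the conjugation identity above. If one prefers an explicit, non-inductive formulation, the same idea can be written as the telescoping expression $(i\,j) = (i\,i+1)(i+1\,i+2)\cdots(j-1\,j)\cdots(i+1\,i+2)(i\,i+1)$, which is simply the unfolding of the recursion and makes it transparent that $(i\,j) \in H$.
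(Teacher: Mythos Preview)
Your argument is correct: the cycle-to-transposition reduction is standard, the conjugation identity $(i\,j) = (j-1\,j)(i\,j-1)(j-1\,j)$ is valid, and the induction on $j-i$ goes through cleanly. There is nothing to compare against, however, since the paper does not supply its own proof of this lemma --- it is simply quoted from Conrad's notes \cite{Cornad} with no argument given. Your proof is essentially the classical one found in that reference.
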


\begin{lemma}\label{kc2}\cite{Cornad}
For $n > 2$, $S_n$ is generated by the $n-1$ transpositions $ (1\,\,2), (1\,\,3), \dots, (1\,\, n)$.
\end{lemma}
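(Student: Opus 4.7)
The plan is to reduce Lemma \ref{kc2} to the already-established Lemma \ref{kc1}. Since Lemma \ref{kc1} asserts that the adjacent transpositions $(1\,\,2), (2\,\,3), \dots, (n-1\,\,n)$ generate $S_n$, it suffices to show that each adjacent transposition $(k\,\,k+1)$ lies in the subgroup $H := \langle (1\,\,2), (1\,\,3), \dots, (1\,\,n) \rangle$.

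The main step is the conjugation identity
\[
(1\,\,i)(1\,\,j)(1\,\,i) = (i\,\,j) \qquad \text{for distinct } i,j \in \{2, \dots, n\}.
\]
I would verify this by tracking the image of each element of $T$ under the product read right-to-left: the element $1$ is sent $1 \mapsto i \mapsto i \mapsto 1$, the element $i$ is sent $i \mapsto 1 \mapsto j \mapsto j$, the element $j$ is sent $j \mapsto j \mapsto 1 \mapsto i$, and any other element of $T$ is fixed by all three factors. This confirms the identity.

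Specializing to $i = k$ and $j = k+1$ with $2 \le k \le n-1$ yields $(k\,\,k+1) \in H$, while $(1\,\,2)$ is itself one of the given generators. Therefore all $n-1$ adjacent transpositions belong to $H$, and Lemma \ref{kc1} forces $H = S_n$.

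There is no genuine obstacle here; the argument is a short reduction via an explicit conjugation identity. The only care needed is to verify the identity correctly by tracking the action on $1$, $i$, $j$, and the remaining fixed points, after which Lemma \ref{kc1} does the rest.
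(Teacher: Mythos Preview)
Your argument is correct: the conjugation identity $(1\,\,i)(1\,\,j)(1\,\,i) = (i\,\,j)$ is verified accurately, and applying it with $i=k$, $j=k+1$ places every adjacent transposition in $H$, so Lemma~\ref{kc1} gives $H=S_n$. Note that the paper itself does not supply a proof of this lemma; it is simply quoted from \cite{Cornad}, so there is no in-paper argument to compare against. Your reduction via Lemma~\ref{kc1} is the standard approach and is entirely in the spirit of the cited source.
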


\begin{lemma}\label{kc3}\cite{Cornad}
For $1 \leq a < b \leq n$, the transposition $(a\,\,b)$ and $n$-cycle $(1\,\,2\dots n)$ generate $S_n$ if and only if $(b-a, n) = 1$.
\end{lemma}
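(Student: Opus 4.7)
The plan is to identify $T = \{1, 2, \ldots, n\}$ with $\mathbb{Z}/n\mathbb{Z}$ so that the $n$-cycle $\sigma := (1\,\,2\cdots n)$ acts as the shift $i \mapsto i+1 \pmod{n}$. Writing $c = b - a$, conjugation by $\sigma^k$ sends $(a\,\,b)$ to the transposition swapping $a+k$ and $b+k$ (indices read modulo $n$). Consequently, the subgroup $H := \langle \sigma,\,(a\,\,b)\rangle$ automatically contains every ``distance-$c$'' transposition $(x\,\,x+c)$ with $x \in \mathbb{Z}/n\mathbb{Z}$. The two directions of the equivalence then pull in opposite ways from this single observation.

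For the necessity direction, suppose $d := \gcd(c, n) > 1$. The idea is to exhibit a proper $H$-invariant partition of $T$, namely the partition $\mathcal{B}$ into the $d$ residue classes modulo $d$. The cycle $\sigma$ permutes these blocks cyclically, while $(a\,\,b)$ fixes each block setwise because $a \equiv b \pmod{d}$. Hence $H$ embeds into the wreath product $S_{n/d} \wr S_d$ of order $(n/d)!^d \cdot d!$, which is strictly less than $n!$ whenever $1 < d < n$. Since $1 \leq c \leq n-1$ together with $d \mid n$ forces $1 \leq d < n$, the assumption $d > 1$ places us squarely in this strict range, giving $H \neq S_n$.

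For the sufficiency direction, assume $\gcd(c, n) = 1$. Then $\tau := \sigma^c$ is again an $n$-cycle on $T$. Set $x_i := \tau^i(a) = a + ic \pmod{n}$ for $i = 0, 1, \ldots, n-1$; since $\tau$ is an $n$-cycle, the sequence $x_0, x_1, \ldots, x_{n-1}$ is a bijective enumeration of $T$. Conjugating the transposition $(a\,\,b) = (x_0\,\,x_1)$ by successive powers of $\tau$ produces $(x_i\,\,x_{i+1})$ for every $i$, so $H$ contains every ``consecutive'' transposition with respect to the relabeling $x_i \leftrightarrow i+1$. Lemma \ref{kc1}, applied under this relabeling, then forces $H = S_n$.

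The step that will demand the most care is the necessity direction: one must verify both that the block system modulo $d$ is genuinely nontrivial (guaranteed by $1 < d < n$) and that the order bound $(n/d)!^d \cdot d! < n!$ is strict on the permitted range, which amounts to observing that partitioning $T$ into $d$ unordered blocks of size $n/d$ admits more than one configuration when $1 < d < n$. The sufficiency direction, by contrast, reduces transparently to Lemma \ref{kc1} via the relabeling induced by $\tau$.
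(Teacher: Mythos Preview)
The paper does not supply its own proof of this lemma; it is quoted directly from Conrad's notes \cite{Cornad} and stated without argument. Your proof is correct and is essentially the standard one found in that reference: sufficiency reduces to Lemma~\ref{kc1} via the relabeling induced by the $n$-cycle $\sigma^{c}$, and necessity is handled by exhibiting the nontrivial block system of residue classes modulo $d=\gcd(c,n)$.
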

\begin{lemma}\cite{Cornad}
The group $ A_n $ is simple for $ n>5 $.
\end{lemma}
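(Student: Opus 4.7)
The plan is to show that any nontrivial normal subgroup $N \trianglelefteq A_n$ must coincide with $A_n$. I would begin by recording two preparatory facts that cut the problem down dramatically. First, $A_n$ is generated by its 3-cycles, since every even permutation is a product of an even number of transpositions and each pair $(a\,b)(c\,d)$ or $(a\,b)(b\,c)$ rewrites as a product of 3-cycles. Second, for $n \geq 5$ all 3-cycles are conjugate in $A_n$: any two 3-cycles are already conjugate in $S_n$, and when $n \geq 5$ at least two extra points are available to modify the conjugator by a disjoint transposition so that it becomes even. Together these reduce the problem to a single task: exhibit at least one 3-cycle inside $N$, after which $N$ contains all 3-cycles and therefore equals $A_n$.

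To extract a 3-cycle, I would pick $\sigma \in N \setminus \{1_{S_n}\}$ minimizing $|\Delta(\sigma)|$, in the notation of Section~\ref{sec2}, and argue that $\sigma$ must itself be a 3-cycle. The argument is a case analysis on the cycle structure of $\sigma$: (i) $\sigma$ contains a cycle of length $\geq 4$; (ii) $\sigma$ contains a 3-cycle but moves additional points; (iii) $\sigma$ is a product of two or more disjoint transpositions. In each case I would exhibit an auxiliary 3-cycle $\tau \in A_n$ whose support intersects $\Delta(\sigma)$ in a prescribed way, and form the commutator $\sigma' := \tau \sigma \tau^{-1} \sigma^{-1}$. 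Normality of $N$ gives $\sigma' \in N$; a direct computation shows $\sigma'$ fixes every point already fixed by $\sigma$ together with at least one additional point of $\Delta(\sigma)$; and a separate check that $\tau$ and $\sigma$ fail to commute ensures $\sigma' \neq 1_{S_n}$. This contradicts minimality unless $\sigma$ is itself a 3-cycle.

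The main obstacle is case (iii) with $\sigma = (a\,b)(c\,d)$, since there is no long cycle or isolated 3-cycle to exploit. Here the hypothesis $n \geq 5$ enters essentially: one picks a fifth point $e \notin \{a,b,c,d\}$ and sets $\tau = (c\,d\,e)$, after which a short computation yields $\sigma' = (c\,e\,d)$, a bona fide 3-cycle with strictly smaller support than $\sigma$. This is also where the hypothesis is sharp, since for $n = 4$ the Klein four-subgroup $\{1_{S_4}, (1\,2)(3\,4), (1\,3)(2\,4), (1\,4)(2\,3)\}$ is a nontrivial proper normal subgroup of $A_4$. A secondary but routine difficulty is the bookkeeping in cases (i) and (ii): for a long cycle $(a_1\,a_2\,a_3\,a_4\,\cdots)$ one uses $\tau = (a_1\,a_2\,a_3)$, and for the mixed case one uses a 3-cycle supported on the fixed 3-cycle of $\sigma$ and one additional moved point; in each subcase one must verify nontriviality of $\sigma'$ and a strict increase in the fixed-point count, which follows once the correct $\tau$ is chosen.
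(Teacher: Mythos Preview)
The paper does not prove this lemma at all: it is stated with citation \cite{Cornad} and no proof is given in the body of the paper. So there is no ``paper's own proof'' to compare against; the authors simply import the result from Conrad's notes.

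Your sketch is the standard textbook argument and is correct in outline, and almost certainly coincides with what is in the cited reference. One small point worth tightening: in case~(iii) you only detail the subcase of exactly two transpositions, whereas your case statement allows ``two or more''. For $\sigma$ a product of $m\geq 3$ disjoint transpositions you cannot always find a fixed fifth point $e$, but you don't need one: taking $\tau=(a\,b\,c)$ supported inside $\Delta(\sigma)$ (with $a,b$ from one transposition and $c$ from another) gives a commutator supported on at most four points, strictly fewer than $|\Delta(\sigma)|=2m\geq 6$. With that adjustment the minimality argument goes through uniformly. Also note that your argument in fact works for $n\geq 5$, not merely $n>5$; the lemma as stated in the paper is slightly weaker than what you prove.
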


\begin{lemma}\cite{Cornad}
For $n > 2$, each element of $A_n$ is a product of $3$-cycles. Therefore the $3$-cycles
generate $A_n$.
\end{lemma}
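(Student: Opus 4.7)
The plan is to reduce the statement to a calculation on pairs of transpositions. Since every permutation in $S_n$ factors as a product of transpositions, and since membership in $A_n$ is characterised by having an even-length factorisation of this kind, any $\sigma \in A_n$ can be written as $\sigma = \tau_1\tau_2\cdots\tau_{2m}$ for some transpositions $\tau_1,\ldots,\tau_{2m}$. Grouping these into consecutive pairs, it then suffices to express each product $\tau_{2i-1}\tau_{2i}$ as a product of $3$-cycles, and concatenate the results.

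For a single pair of transpositions, I would split into three cases according to the overlap of their supports. If the two transpositions coincide, their product is the identity, which is the empty product of $3$-cycles. If they share exactly one letter, say $(a\,\,b)(a\,\,c)$ with $a,b,c$ distinct, then the product is already the $3$-cycle $(a\,\,c\,\,b)$. If the two transpositions are disjoint, say $(a\,\,b)(c\,\,d)$ with $a,b,c,d$ distinct, a direct evaluation on these four letters gives the identity $(a\,\,b)(c\,\,d)=(a\,\,b\,\,c)(b\,\,c\,\,d)$, a product of two $3$-cycles. These three cases are exhaustive (with $n>2$ giving room for the mild case split), so the decomposition goes through.

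The second assertion that the $3$-cycles generate $A_n$ then follows immediately: each $3$-cycle is itself an even permutation, so the subgroup of $S_n$ generated by all $3$-cycles is contained in $A_n$, while the decomposition above supplies the reverse inclusion. There is no real obstacle in this argument; the only step that calls for explicit attention is the disjoint-transpositions identity, which is a routine check once both sides are evaluated on the four relevant points.
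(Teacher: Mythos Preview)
Your argument is correct: the reduction to pairs of transpositions and the three-way case split on their overlap is the standard proof, and your identity $(a\,\,b)(c\,\,d)=(a\,\,b\,\,c)(b\,\,c\,\,d)$ checks out on all four points. The paper itself does not supply a proof of this lemma; it merely cites \cite{Cornad}, and the argument in that source is exactly the one you have given.
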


\begin{lemma}\cite{Cornad}\label{lmn2.6}
Suppose $ H = \langle x\in A_n: o(x) = 2 \rangle $ is subgroup of $ S_n $. Then $ H = A_n.$ 
\end{lemma}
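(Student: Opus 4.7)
The plan is to establish the two inclusions $H \subseteq A_n$ and $A_n \subseteq H$ separately. The first inclusion is essentially automatic: every generator of $H$ is by definition an involution lying in $A_n$, and since $A_n$ is itself a subgroup of $S_n$, the subgroup of $S_n$ generated by these elements is contained in $A_n$.

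For the reverse inclusion $A_n \subseteq H$, I would appeal to the preceding lemma stating that $A_n$ is generated by $3$-cycles; hence it is enough to show that an arbitrary $3$-cycle belongs to $H$. Given $(a\,\,b\,\,c)\in A_n$, and assuming $n\geq 5$ as the statement implicitly requires, there exist two distinct elements $d,e\in T\setminus\{a,b,c\}$. I would then exhibit $(a\,\,b\,\,c)$ as a product of two involutions drawn from $A_n$. Specifically, take
\begin{align*}
\sigma_1 = (a\,\,c)(d\,\,e), \qquad \sigma_2 = (a\,\,b)(d\,\,e).
\end{align*}
Each $\sigma_i$ is a product of two disjoint transpositions, hence is an even permutation of order $2$, i.e., an involution in $A_n$, and therefore lies in $H$. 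Because $(d\,\,e)$ is disjoint from, and thus commutes with, both $(a\,\,b)$ and $(a\,\,c)$, one computes
\begin{align*}
\sigma_1\sigma_2 = (a\,\,c)(d\,\,e)(a\,\,b)(d\,\,e) = (a\,\,c)(a\,\,b)(d\,\,e)^2 = (a\,\,c)(a\,\,b) = (a\,\,b\,\,c).
\end{align*}
Hence every $3$-cycle belongs to $H$, yielding $A_n\subseteq H$, and combined with the first inclusion this gives $H=A_n$.

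The only genuine obstacle is the need for two "spare" points $d,e$ outside the support $\{a,b,c\}$ of the target $3$-cycle, which is what forces the standing assumption $n\geq 5$ (consistent with the simple-group lemma stated just above): for $n=3$ the group $A_n$ contains no involutions at all, while for $n=4$ the three double transpositions generate only the Klein four-group, so both edge cases genuinely fail and must be excluded. Everything else is routine permutation computation, and no further structural result beyond the already-cited fact that $3$-cycles generate $A_n$ is required.
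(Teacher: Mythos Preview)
The paper does not supply a proof of this lemma; it is stated as a preliminary result with the citation \cite{Cornad} and no argument is given. Your proposal is therefore not competing with any in-paper proof.

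On its own merits, your argument is correct. The inclusion $H\subseteq A_n$ is immediate, and writing an arbitrary $3$-cycle $(a\,\,b\,\,c)$ as $(a\,\,c)(d\,\,e)\cdot(a\,\,b)(d\,\,e)$ is exactly the standard trick, consistent with the composition convention used elsewhere in the paper (e.g.\ the computation $zy=(1\,\,i_2\,\,n)\cdots$ in Case~II of Theorem~\ref{lmn3.3}). Your observation that $n\geq 5$ is genuinely required, with $A_3$ and $A_4$ as counterexamples, is accurate and worth recording; the paper leaves this hypothesis implicit. It is also worth noting that your proof uses only double transpositions among the involutions of $A_n$, which is precisely the form in which the lemma is invoked later (in the proof for $A_n$ with $I_{min}(H)=2$, where the assumption is that $H$ contains all products of two disjoint transpositions).
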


\begin{lemma}\label{lmn2.8}\cite{Dixon}
Suppose $ H = \{g^2: g\in S_n\}$ is the set of all square elements of $ S_n $. Then $ H = A_n $ for $ n\leq 5 $ otherwise, $ H\subset A_n $ such that $ H $ is not subgroup of $ S_n$ and $ A_n\setminus H \neq \emptyset$
\end{lemma}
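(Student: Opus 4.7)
The plan is to leverage the standard cycle-type characterization of squares in $S_n$. Squaring a $k$-cycle yields a single $k$-cycle when $k$ is odd and splits into two disjoint $(k/2)$-cycles when $k$ is even; tracking this orbit by orbit shows that a permutation $\pi \in S_n$ is a square if and only if, for every even integer $c$, the number of $c$-cycles appearing in the cycle decomposition of $\pi$ is even. I would begin by deriving this criterion. The inclusion $H \subseteq A_n$ is then immediate, since each even-length cycle contributes an odd number of transpositions and the criterion forces these to pair up.

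For the case $n \leq 5$, I would enumerate the cycle types occurring in $A_n$: the identity, $3$-cycles, double transpositions (cycle type $(2,2)$), and, when $n = 5$, $5$-cycles. The only even cycles that appear anywhere in this list are the two $2$-cycles of a double transposition, which come in a pair and therefore satisfy the criterion. Hence every element of $A_n$ is a square and $H = A_n$.

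For $n \geq 6$, I would exhibit two explicit witnesses. First, the element $(1\,2\,3\,4)(5\,6)$ lies in $A_n$ (its cycle decomposition contributes $3 + 1 = 4$ transpositions), but its cycle type $(4,2)$ contains a single $4$-cycle and a single $2$-cycle, both odd counts, so it fails the criterion; this gives $A_n \setminus H \neq \emptyset$. Second, to show $H$ is not a subgroup, I would take two squares of $4$-cycles, one supported on $\{1,2,3,4\}$ and the other on $\{3,4,5,6\}$, chosen so that their product has cycle type $(4,2)$ and therefore falls outside $H$. Since both witnesses live in $S_6 \subseteq S_n$, the argument transfers uniformly to every $n \geq 6$. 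One may alternatively obtain non-closure by a counting argument: the criterion lets one compute $|H|$ exactly in $A_n$, and for $n = 6$ the value does not divide $|A_n|$, so $H$ cannot be a subgroup by Lagrange's theorem.

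The only nontrivial step I anticipate is proving the cycle-type criterion cleanly; once that is in hand, the containment $H \subseteq A_n$, the equality $H = A_n$ for small $n$, and the two witnesses for large $n$ all reduce to short routine checks that I would not belabour.
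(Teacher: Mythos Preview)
The paper does not prove this lemma; it is quoted as a known fact with a citation to Dixon--Mortimer, so there is no argument in the paper to compare against. Your approach via the standard cycle-type criterion (a permutation is a square in $S_n$ if and only if, for every even $c$, the number of $c$-cycles in its decomposition is even) is correct and is exactly the natural self-contained route. The verifications for $n \le 5$ and the witness $(1\,2\,3\,4)(5\,6) \in A_n \setminus H$ for $n \ge 6$ are fine as stated.

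One small caution on the non-closure step: not every pair of double transpositions supported on $\{1,2,3,4\}$ and $\{3,4,5,6\}$ will have a product of type $(4,2)$ --- for instance, $(1\,3)(2\,4)\cdot(3\,5)(4\,6) = (1\,3\,5)(2\,4\,6)$, which is still a square. You do say ``chosen so that'' the product has type $(4,2)$, and such choices certainly exist (e.g.\ $(1\,2)(3\,4)\cdot(3\,5)(4\,6) = (1\,2)(3\,5\,4\,6)$), so the argument goes through; just be sure to exhibit a concrete pair rather than leave it implicit. Your alternative Lagrange-counting argument for $n=6$ also works and is a clean fallback.
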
 

\begin{lemma}\label{lmn2.9}\cite{malik97}
Suppose $ x = (i_1\,\,i_2\,\,\dots i_m) \cdots (j_1\,\,j_2\,\,\dots j_l) \in S_n $. Then for all $ y \in S_n $,  $ yxy^{-1} = (y(i_1)\,\,y(i_2)\dots y(i_m)) \cdots (y(j_1)\,\,y(j_2)\dots y(j_l))$.
\end{lemma}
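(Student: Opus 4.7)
The plan is to reduce the statement to the case of a single cycle, since conjugation is a group homomorphism and thus distributes across the product of disjoint cycles. More precisely, if $x = c_1 c_2 \cdots c_s$ where each $c_r$ is a cycle, then $yxy^{-1} = (y c_1 y^{-1})(y c_2 y^{-1}) \cdots (y c_s y^{-1})$, so it suffices to verify the formula for an individual cycle $c = (i_1\,\,i_2\,\,\dots\,\,i_m)$.

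For the single-cycle case, I would prove the identity $y c y^{-1} = (y(i_1)\,\,y(i_2)\,\,\dots\,\,y(i_m))$ by evaluating both permutations on an arbitrary element $k \in T$ and checking equality of images. There are two cases. First, if $k = y(i_j)$ for some $j \in \{1,\dots,m\}$, then $y^{-1}(k) = i_j$, so $c(y^{-1}(k)) = i_{j+1}$ (indices taken modulo $m$), and hence $(ycy^{-1})(k) = y(i_{j+1})$. The cycle $(y(i_1)\,\,\dots\,\,y(i_m))$ also sends $y(i_j)$ to $y(i_{j+1})$, matching. Second, if $k \notin \{y(i_1),\dots,y(i_m)\}$, then $y^{-1}(k) \notin \{i_1,\dots,i_m\}$, so $c$ fixes $y^{-1}(k)$ and $(ycy^{-1})(k) = k$; the cycle on the right-hand side also fixes $k$ because $k$ is not among its moved points. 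Both maps therefore agree on all of $T$.

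Combining the single-cycle identity with the homomorphism property $y(c_1 c_2)y^{-1} = (y c_1 y^{-1})(y c_2 y^{-1})$ applied iteratively then yields the general formula. I do not anticipate any serious obstacle here: the only subtlety is handling the indexing $i_{j+1}$ modulo $m$ when $j = m$ (so that $i_{m+1}$ is read as $i_1$), and making sure the disjoint-cycle decomposition of $x$ is genuinely decomposed before conjugating term by term. Since the argument only uses that $y$ is a bijection of $T$, no special hypotheses on $n$ or on the cycle structure of $x$ are needed.
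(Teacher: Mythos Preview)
Your argument is correct and is the standard textbook proof of this classical identity. The paper does not supply its own proof of this lemma at all: it is stated with a citation to \cite{malik97} and used as a background fact, so there is nothing to compare against beyond noting that your proof is exactly what one finds in that reference.
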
 
\begin{lemma}\label{lm 1}
 For $ x,\, a \in S_n $ such that $ x\neq a^{-1}$ and $ o(x) = 2.$ Then $ cl(xa) = cl(xa^{-1}) = cl(ax) $.   
\end{lemma}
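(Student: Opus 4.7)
The plan is to deduce both equalities from two elementary facts. The first is the universal identity $v(uv)v^{-1} = vu$ in any group, which shows $\mathrm{cl}(uv) = \mathrm{cl}(vu)$ for all group elements $u,v$. The second is the classical observation that in $S_n$, a permutation and its inverse have the same cycle type and therefore lie in the same conjugacy class, so $\mathrm{cl}(g) = \mathrm{cl}(g^{-1})$ for every $g \in S_n$ (this is immediate from Lemma~\ref{lmn2.9}).

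First I would apply the first fact with $u = x$ and $v = a$ to obtain $\mathrm{cl}(xa) = \mathrm{cl}(ax)$, which settles the equality $\mathrm{cl}(xa) = \mathrm{cl}(ax)$. For the remaining equality $\mathrm{cl}(xa) = \mathrm{cl}(xa^{-1})$, I would chain the two facts together: since $o(x) = 2$ gives $x = x^{-1}$, we have $(xa)^{-1} = a^{-1}x^{-1} = a^{-1}x$, so by the second fact $\mathrm{cl}(xa) = \mathrm{cl}((xa)^{-1}) = \mathrm{cl}(a^{-1}x)$. A further application of the first fact (now with $u = a^{-1}$ and $v = x$) yields $\mathrm{cl}(a^{-1}x) = \mathrm{cl}(xa^{-1})$, completing the chain.

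The hypothesis $x \neq a^{-1}$ merely ensures that $xa \neq 1_{S_n}$, so that the conjugacy class under discussion is that of a non-identity element; it is not actually needed for the equality of the classes. There is no genuine obstacle here — the argument is a two-step chase once the two standard facts are in place, and the order-two hypothesis on $x$ enters only through the identification $x^{-1} = x$.
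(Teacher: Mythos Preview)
Your proof is correct and follows essentially the same approach as the paper: both use that $\mathrm{cl}(uv)=\mathrm{cl}(vu)$ via conjugation and that $\mathrm{cl}(g)=\mathrm{cl}(g^{-1})$ in $S_n$, together with $x=x^{-1}$. The paper shaves off one step by computing $(ax)^{-1}=x^{-1}a^{-1}=xa^{-1}$ directly, whereas you pass through $(xa)^{-1}=a^{-1}x$ and then swap; your remark that the hypothesis $x\neq a^{-1}$ is inessential to the equalities is also correct.
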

 \begin{proof}
Since $ xa = a^{-1}(ax)a $ and $ ax = x^{-1}(xa)x$, therefore, $ cl(xa) = cl(ax).$ We know that $ cl(y) = cl(y^{-1})$ for any $y\in S_n$. Then $ cl(ax)=cl((ax)^{-1}) = cl (xa^{-1})$ as $ x = x^{-1}$ and this completes the proof. 
 \end{proof}   
\begin{lemma}\label{lmn2.10}
Let $ x = (i_1\,\,i_2)\cdots (i_{2r-1}\,\,i_{2r})$ and $ y =(t_1\,\,t_2)\cdots(t_{2l-1}\,\,t_{2l})$ be two involutions in $ S_n$. Then $ xy = yx $ if and only if there exist $ k_1, k_2, k_3 \in \{0, 1, \dots, r\} $ such that $ (t_1\,\,t_2)\cdots(t_{2k_1-1}\,\,t_{2k_1}) = (i_1\,\,i_2)\cdots(i_{2k_1-1}\,\,i_{2k_1})$, $ (t_{2k_1+1}\,\,t_{2k_1+2})(t_{2k_1+3}\,\,t_{2k_1+4}) \cdots  (t_{2m_1-3}\,\,t_{2m_1-2})  (t_{2m_1-1}\,\,t_{2m_1}) = (i_{2k_1+1}\,\,i_{2k_1+3}) (i_{2k_1+2}\,\,i_{2k_1+4})\cdots(i_{2m_1-3}\,\,i_{2m_1-1})(i_{2m_1-2}\,\, i_{2m_1})$, $ (t_{2m_1 + 1}\,\,t_{2m_1+2})(t_{2m_1 + 3}\,\,t_{2m_1 + 4}) \cdots  (t_{2m_2-3}\,\,t_{2m_2-2})  (t_{2m_2-1}\,\,t_{2m_2}) = (i_{2m_1+1}\,\,i_{2m_1+4}) (i_{2m_1+2}\,\,i_{2m_1+3})\cdots(i_{2m_2-2}\,\,i_{2m_2-1})(i_{2m_2-3}\,\, i_{2m_2})$ and $ \Delta\left( (t_{2m_2 + 1}\,\,t_{2m_2 + 2})\cdots(t_{2l-1}\,\,t_{2l}) \right) \cap \Delta (x) = \emptyset$, where $m_1 = k_1 + k_2$, $m_2 = k_1 + k_2 + k_3 $; and $ k_2$, $k_3$ are even.
\end{lemma}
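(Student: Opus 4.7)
The plan is to interpret commutation $xy=yx$ as $yxy^{-1}=x$ and use Lemma~\ref{lmn2.9}, which says conjugation by $y$ relabels each cycle of $x$ by applying $y$ to its entries. Since the disjoint $2$-cycles of $x$ are uniquely determined by $x$, conjugation by $y$ must permute the set $\mathcal{T}(x):=\{(i_1\,i_2),\ldots,(i_{2r-1}\,i_{2r})\}$ of $x$'s transpositions (setwise). Symmetrically, $x$-conjugation permutes $\mathcal{T}(y)$. The entire lemma will follow by classifying the orbits of the $\langle y\rangle$-action on $\mathcal{T}(x)$ together with the fixed points lying outside $\Delta(y)$, and then reindexing both $x$ and $y$ to match the four blocks in the statement.

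For the forward direction, fix a transposition $(a\,b)\in\mathcal{T}(x)$. Since $y^2=1$, either $y$ fixes $\{a,b\}$ setwise or it swaps $(a\,b)$ with another $(c\,d)\in\mathcal{T}(x)$. In the first case, either $y(a)=a$ and $y(b)=b$ (so $\{a,b\}\cap\Delta(y)=\emptyset$) or $y(a)=b$ (so $(a\,b)$ is also a $2$-cycle of $y$). In the second case, since $y$ is an involution and $\{y(a),y(b)\}=\{c,d\}$, exactly one of the two pairings $y(a)=c,\,y(b)=d$ or $y(a)=d,\,y(b)=c$ occurs, and the corresponding $2$-cycles $(a\,c)(b\,d)$ or $(a\,d)(b\,c)$ appear in the cycle decomposition of $y$ (using disjointness of $x$'s cycles to ensure $a,b,c,d$ are distinct, and $y^2=1$ to confirm these are full orbits of $y$). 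This gives four mutually exclusive kinds of transpositions of $x$: common with $y$, swapped-type~A, swapped-type~B, and disjoint from $\Delta(y)$. Bundle the type-A pairs as the second block (forcing $k_2$ even because they come in pairs), the type-B pairs as the third block ($k_3$ even for the same reason), the common transpositions as the first block, and reindex $i_1,\ldots,i_{2r}$ so that the blocks appear in the stated order. A parallel classification of $\mathcal{T}(y)$ reindexes $t_1,\ldots,t_{2l}$: the first $k_1$ transpositions of $y$ coincide with those of $x$, the next $k_2$ realize the type-A swaps as $(i_{2k_1+2j-1}\,i_{2k_1+2j+1})(i_{2k_1+2j}\,i_{2k_1+2j+2})$, the next $k_3$ realize the type-B swaps as $(i_{2m_1+2j-1}\,i_{2m_1+2j+2})(i_{2m_1+2j}\,i_{2m_1+2j+1})$, and the remaining $y$-transpositions consist of elements outside $\Delta(x)$, yielding the final disjointness condition.

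The converse is a block-by-block verification. On the first block $xy$ and $yx$ act identically because the same transpositions appear in both; on each type-A pair one checks directly that $(i_{2\alpha-1}\,i_{2\alpha})(i_{2\beta-1}\,i_{2\beta})$ commutes with $(i_{2\alpha-1}\,i_{2\beta-1})(i_{2\alpha}\,i_{2\beta})$ by tracing the six relevant points, and similarly for type-B pairs; the tail of $y$ commutes with $x$ because their supports are disjoint by hypothesis, and symmetrically the tail of $x$ (the indices $i_{2m_2+1},\ldots,i_{2r}$) is disjoint from $\Delta(y)$ since every index of $\Delta(y)$ has been consumed by one of the first three blocks or lies outside $\Delta(x)$.

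The main obstacle I expect is bookkeeping: showing that the four-way classification of $\mathcal{T}(x)$ can simultaneously be arranged to yield the stated neat indexing on both $x$ and $y$, and that $k_2,k_3$ are forced to be even. The underlying permutation-theoretic content is routine, but writing the reindexing cleanly (so that the second and third blocks on the $y$-side line up with consecutive index pairs of $x$) requires care, and one must be explicit that type-A and type-B are genuinely different orbits rather than the same orbit written two ways.
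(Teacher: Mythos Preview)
Your approach is essentially the same as the paper's: both start from $yxy^{-1}=x$, invoke Lemma~\ref{lmn2.9} to see that conjugation by $y$ permutes the set of transpositions of $x$, and then classify each transposition of $x$ according to whether $y$ fixes it pointwise, swaps its entries, or exchanges it with another transposition of $x$ (in one of two pairings), with the converse declared straightforward. If anything, your write-up is more explicit than the paper's about the reindexing and the parity of $k_2,k_3$, which the paper leaves to the reader.
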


\begin{proof}
    Let $ x = (i_1\,\,i_2)\cdots(i_{2r-1}\,\,i_{2r}) $ and $ y = (t_1\,\,t_2)\cdots(t_{2l-1}\,\,t_{2l})$ be two involutions of $ S_n $ such that $ yxy^{-1} = x.$ Then It follows from Lemma \ref{lmn2.9} that $ (y(i_1)\,\,y(i_2))\cdots(y(i_{2r-1})\,\,y(i_{2r})) = (i_1\,\,i_2)\cdots(i_{2r-1}\,\,i_{2r}).$ To proceed further, we consider the following cases.

\noindent \textbf{Case I:} When $ (y(i_j)\,\,y(i_{j+1})) = (i_j\,\,i_{j+1})$ for some $ j\in \{1,\dots,~2r-1\} $.

\noindent Without loss of generality, let $ j = 1$.

\noindent Suppose $ y(i_1) = i_1 $. Then $ y(i_2) = i_2.$ This shows that $ \{i_1,\,\,i_2\}\cap \Delta(y) = \emptyset.$ Consequently, $ (i_1\,\,i_2) $ and $ y $ are disjoint.

\noindent Suppose $ y(i_1) = i_2 $, then $ (i_1\,\,i_2) = (t_{k}\,\,t_{k+1}) $ for some $ k\in \{1,\dots,~2l-1\}$ as $ o(y) = 2 $. This shows that $ x $ and $ y $ share a transposition.

\noindent \textbf{Case II:} When $ (y(i_k)\,\,y(i_{k+1})) = (i_m\,\,i_{m+1})$ for some $ k,\,\,m \in \{1,\dots,~2r-1\} $ such that $ k\neq m$.

\noindent Without loss of generality, let $ k = 1,~ m = 3$. Suppose $ y(i_1) = i_3 $, then $ y(i_2) = i_4$. This shows that $ y(i_3) = i_1 $ and $ y(i_4) = i_2 $ as $ y $ is an involution. Therefore, $(i_1\,\, i_3) = (t_a\,\,t_{a+1})$ and $(i_2\,\,i_4) = ( t_b\,\,t_{b+1}) $ for some $ a \neq b \in \{1,~2,\dots,~2l-1\}$. Suppose $ y(i_1) = i_4 $, then $ y(i_2) = i_3$. Then we proceed with similar way, we get a pair of transpositions $ (T_1,\,\,T_2)$ of $ x$ and a pair of transpositions $ (U_1,\,\,U_2)$ of $ y $ such that $ \Delta(T_1T_2) = \Delta(U_1U_2)$, and this concludes this case.

\noindent The converse is straightforward to establish.
\end{proof}

\begin{remark}\label{rmk01}
  The above lemma demonstrates that if $|\Delta(x)\cap \Delta (y)| $ is odd for $ x,\,\,y \in I(S_n)$, then $ xy \neq yx.$    
\end{remark}

\section{Perfect codes}\label{sec3}
In this section, we study perfect codes of $ S_n $ in the Cayley sum graph. Using those results, we characterize the perfect codes of $ A_n $ in the Cayley sum graph. These results give the complete characterizations of perfect codes of $ S_n $ and $ A_n $ in the Cayley sum graph.
\begin{lemma}\cite{Ma2020}\label{lmn3.1}
Let $S$ and $H$ be a square-free normal subset and a subgroup of $G$, respectively.
The following are equivalent.
\begin{enumerate}[(i)]
    \item $H$ is a perfect code of $CayS(G, S)$.
    \item $ S\cup\{1_G\} $ is a right transversal of $H$ in $G$.
    \item $|G : H| =  |S| + 1$ and $H \cap(S\cup SS^{-1}) = \{1_G\}$.
\end{enumerate}
\end{lemma}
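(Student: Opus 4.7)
The plan is to translate the perfect-code condition on $H$ into a right-transversal condition via closed neighborhoods in $CayS(G,S)$, and then unwind (ii) into the arithmetic condition (iii). Three structural facts drive everything: square-freeness of $S$ forces $1_G\notin S$ and $h^2\notin S$ for every $h\in H$; normality of $S$ makes adjacency symmetric, so the neighbors of $h$ are exactly $h^{-1}S$; consequently $N[h]=\{h\}\sqcup h^{-1}S$ is a disjoint union of size $|S|+1$.

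For (ii)$\Rightarrow$(i), assume $S\cup\{1_G\}$ is a right transversal. Then $H$ and $Hs$ are distinct cosets for every $s\in S$, so $H\cap S=\emptyset$, and this already gives independence of $H$ since $h_1h_2\in H$ cannot lie in $S$. For any $g\notin H$, the transversal gives a unique $h_0\in H$ and $s\in S$ with $g=h_0 s$; then $hg\in S$ is equivalent to $g=h^{-1}s'$ for some $s'\in S$, and uniqueness of the transversal decomposition forces $s'=s$ and $h=h_0^{-1}$, yielding the perfect-domination property. For (i)$\Rightarrow$(ii), the partition $G=\bigsqcup_{h\in H}N[h]$ gives $|G|=|H|(|S|+1)$, hence $|G:H|=|S|+1$. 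To show the cosets $\{Hx:x\in S\cup\{1_G\}\}$ are pairwise distinct, suppose $Hs=H$ for some $s\in S$; then $s\in H$, and the edge $1_G\cdot s=s\in S$ lives inside $H$, contradicting independence. If instead $Hs_1=Hs_2$ for distinct $s_1,s_2\in S$, set $h:=s_1s_2^{-1}\in H\setminus\{1_G\}$; then the vertex $s_2\notin H$ is adjacent both to $1_G$ (via $1_G\cdot s_2=s_2\in S$) and to $h$ (via $h\cdot s_2=s_1\in S$), contradicting the perfect-code hypothesis.

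For (ii)$\Leftrightarrow$(iii), the cardinality equality is immediate. Distinctness of the cosets $Hx$ for $x\in S\cup\{1_G\}$ is equivalent to $x_1x_2^{-1}\notin H$ whenever $x_1\neq x_2$ lie in $S\cup\{1_G\}$; splitting into the cases $\{x_2=1_G,\,x_1\in S\}$ and $\{x_1,x_2\in S\}$ produces exactly $H\cap S=\emptyset$ and $H\cap SS^{-1}\subseteq\{1_G\}$, which reassemble into $H\cap(S\cup SS^{-1})=\{1_G\}$ (the point $1_G$ belongs to $SS^{-1}$ as soon as $S\neq\emptyset$). The principal subtlety is to keep the square-free and normal hypotheses active at every step: without square-freeness the equality $|N[h]|=|S|+1$ can fail by a collision $h^2\in S$, and without normality $h^{-1}S$ is no longer a symmetric neighborhood and the coset dictionary between (i) and (ii) breaks down.
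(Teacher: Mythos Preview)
The paper does not supply its own proof of this lemma: it is quoted verbatim from \cite{Ma2020} and used as a black box. Your argument is a correct self-contained proof of the cited result, so there is no conflict to report. The only place worth tightening is the clause ``the vertex $s_2\notin H$'' in your (i)$\Rightarrow$(ii) step: this is true, but it relies on the immediately preceding paragraph (where you derived $H\cap S=\emptyset$ from independence), so you might make that dependence explicit. You also correctly flag the degenerate case $S=\emptyset$, in which (iii) as stated fails while (i) and (ii) hold trivially with $H=G$; this edge case is harmless for the paper's applications since every $H$ considered there is proper.
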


\begin{lemma}\label{lmn3.2}
A subgroup $H$ of $G$ is not a perfect subgroup code of $G$ if there exists an element $x \in G \setminus H$ such that the coset $xH$ satisfies either of the following conditions:

\begin{enumerate}[(i)]
\item every element of $xH$ is square in $G$; or
\item for every non-square element $z \in xH$, there exists $w \in G$ such that $|cl(z) \cap wH| > 1$ and/or $cl(z) \cap H \neq \emptyset$.
\end{enumerate}
\end{lemma}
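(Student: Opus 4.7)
The plan is to argue by contradiction using the characterization supplied by Lemma \ref{lmn3.1}. Suppose that $H$ is a perfect subgroup code of $G$; then there exists a square-free normal subset $S$ of $G$ for which $S\cup\{1_G\}$ is a right transversal of $H$. Since $x\notin H$, the coset $xH$ differs from $H$ and therefore contains exactly one element $s$ of $S\cup\{1_G\}$; because $1_G$ already lies in $H$, this representative $s$ is forced into $S$. The strategy is to show that this single element $s\in xH\cap S$ cannot coexist with either hypothesis (i) or (ii), producing the desired contradiction in each case.

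For condition (i) the argument is immediate: every element of $xH$ is a square in $G$, so in particular $s$ is a square, contradicting the square-freeness of $S$. For condition (ii), square-freeness of $S$ first forces $s$ to be a non-square element of $xH$, so the hypothesis supplies some $w\in G$ with $|cl(s)\cap wH|>1$ or $cl(s)\cap H\neq\emptyset$. Normality of $S$ gives $cl(s)\subseteq S$. If $cl(s)\cap H\neq\emptyset$, then $S\cap H\neq\emptyset$; but $H$ is already represented in the transversal by $1_G$, forcing $S\cap H=\emptyset$, a contradiction. If instead $|cl(s)\cap wH|>1$, then two distinct elements of $cl(s)\subseteq S$ lie in the coset $wH$, again contradicting that $S\cup\{1_G\}$ is a transversal of $H$ in $G$.

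I do not anticipate a serious obstacle; the lemma is essentially a repackaging of Lemma \ref{lmn3.1}, with square-freeness of $S$ handling hypothesis (i) and normality of $S$ together with the transversal property handling hypothesis (ii). The only step requiring mild care is the opening observation that $s\neq 1_G$, so that $s$ genuinely belongs to $S$ and thereby inherits both structural features (normality and square-freeness) of the connecting set; once this is in place, the two sub-cases of (ii) each collapse to a direct counting contradiction against the transversal condition.
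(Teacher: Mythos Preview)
Your proof is correct and takes essentially the same approach as the paper, which simply records that the lemma ``follows immediately from Lemma~\ref{lmn3.1}''; you have supplied exactly the details that one-line proof suppresses, namely that the transversal property yields a unique $s\in S$ in the coset, after which square-freeness handles (i) and normality of $S$ (giving $cl(s)\subseteq S$) handles (ii). One small point worth tightening: Lemma~\ref{lmn3.1} phrases the transversal as a \emph{right} transversal while you immediately apply it to the \emph{left} cosets $xH$ and $wH$; this is harmless because normality of $S$ makes the adjacency condition $gh\in S$ symmetric in $g$ and $h$, so $S\cup\{1_G\}$ is simultaneously a left and right transversal, but you may wish to say so explicitly.
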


\begin{proof}
    This follows immediately from Lemma \ref{lmn3.1}.
\end{proof}

\begin{theorem}\label{thm002}\label{lmn3.3}
Let $ H $ be a nontrivial odd-order subgroup of $ S_n$. Then $ H $ is not a perfect subgroup code of $ S_n $ in the Cayley sum graph.
\end{theorem}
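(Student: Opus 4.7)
The plan is to apply Lemma \ref{lmn3.2} by exhibiting a left coset $\tau H$ satisfying its condition (ii). Since every element of $H$ has odd order and is thus a product of odd-length cycles---each of which is an even permutation---we have $H \subseteq A_n$. Consequently, for any odd permutation $\tau$ the coset $\tau H$ lies outside $A_n$, consists entirely of odd permutations, and hence of non-squares (squares have even sign). So it suffices to verify condition (ii) of Lemma \ref{lmn3.2} for every $z \in \tau H$, by choosing $\tau$ appropriately.

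By Cauchy's theorem applied to the odd $|H|$, fix an element $h_0 \in H$ of some odd prime order $p$ and write $h_0 = c_1 c_2 \cdots c_m$, where $c_i = (a_i^{(1)}\,a_i^{(2)}\,\ldots\,a_i^{(p)})$ are the disjoint $p$-cycles of $h_0$. The cleanest case is $p=3$ and $m=1$, where $h_0=(a\,b\,c)\in H$ is a $3$-cycle; here I take $\tau = (a\,b)$. By Lemma \ref{lmn2.9}, $h_0\tau h_0^{-1} = (b\,c)$, and since $\tau^{-1}(b\,c) = (a\,b)(b\,c) = (a\,b\,c) = h_0 \in H$, conjugation by $h_0$ preserves $\tau H$. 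The resulting $\langle h_0\rangle$-action on $\tau H$ has orbits of size $1$ or $3$, and I would rule out fixed points by a short check: if $z = \tau h' \in \tau H$ commuted with $h_0$, then $z$ would preserve $\{a,b,c\}$ and act on it as a power of $(a\,b\,c)$, whence $h'|_{\{a,b,c\}} = (a\,b)\cdot z|_{\{a,b,c\}}$ would be one of the transpositions $(a\,b)$, $(b\,c)$, or $(a\,c)$ in $S_3$, contradicting the odd order of $h'$. Hence every orbit has size $3$, so $|cl(z)\cap \tau H|\ge 3>1$ for all $z\in\tau H$, verifying condition (ii).

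The main obstacle is extending the fixed-point analysis to the remaining cases in which $H$ contains no $3$-cycle. For $p=3$ and $m\ge 2$, I would instead take $\tau = \prod_{i=1}^{m}(a_i^{(1)}\,a_i^{(2)})$ (adjoining one extra disjoint transposition outside the support of $h_0$ if $m$ is even, to keep $\tau$ odd); the identity $\prod_i (a_i^{(1)}\,a_i^{(2)})\cdot \prod_i (a_i^{(2)}\,a_i^{(3)}) = h_0$ again yields $h_0\tau h_0^{-1}\in\tau H$. Ruling out fixed points requires a parity argument on $h'$: any $z = \tau h'\in \tau H\cap C_{S_n}(h_0)$ must permute the cycles $\{c_i\}$ by some $\sigma \in S_m$ of odd order, and the bijections between the sets $\{a_i^{(j)}\}_{j=1}^{3}$ induced by $h'$ along each $\sigma$-orbit are forced by the form of $\tau$ to be transpositions in $S_3$; composing these $k$ transpositions around a $\sigma$-orbit of odd length $k$ yields an odd permutation of $S_3$ (of order $2$), forcing a contribution of even order $2k$ to $o(h')$ and contradicting that $|H|$ is odd. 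For $p\ge 5$, a parallel argument picks $\tau$ inside $N_{S_n}(\langle h_0\rangle)\setminus C_{S_n}(h_0)$ (non-empty since $|N/C|$ divides $p-1\ge 4$), and the cross-cycle fixed-point analysis when $H$ is non-abelian is the most delicate part of the proof.
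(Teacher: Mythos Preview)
Your approach is incomplete by your own admission: the fixed-point analysis for $p=3$, $m\ge 2$ is only sketched, and for $p\ge 5$ you offer no concrete argument at all. Even the sketches have loose ends --- in the $p\ge 5$ case you require $\tau$ to be an odd permutation so that $\tau H$ consists of non-squares, but an arbitrary element of $N_{S_n}(\langle h_0\rangle)\setminus C_{S_n}(h_0)$ need not be odd, and you also do not verify $\tau\notin H$. More fundamentally, the whole strategy of building an $\langle h_0\rangle$-action on $\tau H$ and then ruling out fixed points by case analysis on the cycle structure of $h_0$ is far more work than the problem requires.

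The paper's argument is uniform and essentially one line. Since $|H|$ is odd, every non-identity $y\in H$ satisfies $y\ne y^{-1}$, so $H=\{1_{S_n},\, y_1,\, y_1^{-1},\ldots,\, y_m,\, y_m^{-1}\}$. Take the coset representative $x=(1\,2)(3\,4)$: it is a square in $S_n$ (e.g.\ $x=(1\,3\,2\,4)^2$) and lies outside $H$ since $o(x)=2$. Lemma~\ref{lm 1} says that for any involution $x$ one has $cl(xa)=cl(xa^{-1})$; hence $xy_i$ and $xy_i^{-1}$ are distinct conjugate elements of $xH$ for every $i$. Thus each element of $xH$ is either the square $x$ itself or belongs to a conjugate pair inside $xH$, and condition~(ii) of Lemma~\ref{lmn3.2} holds with the single choice $w=x$. (For very small $n$ where $(1\,2)(3\,4)$ is unavailable, the paper uses $S_n^2=A_n$ directly.) The key idea you are missing is this inverse pairing: instead of conjugating by an element of $H$ and chasing fixed points, simply pair $xy$ with $xy^{-1}$ --- Lemma~\ref{lm 1} makes them conjugate, and the odd order of $H$ makes them distinct.
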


\begin{proof}
Let $ H $ be an odd-order subgroup of $ S_n$ such that $ H = \{ 1_{S_n}, y_1, y_1^{-1}, \dots, y_m, y_m^{-1}\}.$ It is clear that $ H \subset A_n $ as any odd-order element in $ S_n $ is even permutation. Since any odd-order element of $ S_n $ is square element, therefore, every element of $ H $ is a square element in $S_n$. To proceed further, we consider the following two cases.
 
\noindent\textbf{Case I:} When $ n\leq 5.$

\noindent Then $ S_n^2 = A_n$ due to Lemma \ref{lmn2.8}. If $ A_n\setminus H \neq \emptyset $, then there exists $ x\in A_n\setminus H $ such that $ xH \subset A_n = S_n^2.$ Therefore, $ xH $ contains all square elements. Then it follows from Lemma \ref{lmn3.2} that $ H $ is not perfect. If $ H = A_n $, then $ S_n  $ as $ |cl(x)\cap (S_n\setminus H)| \neq 1$ for $ x \in S_n\setminus H$. Therefore, it follows from Lemma \ref{lmn3.2} that $ H $ is not perfect.

\noindent\textbf{Case II:} When $ n>5$. 

\noindent Then $ A_n\setminus S_n^2\neq \emptyset$ due to Lemma \ref{lmn2.8}. Since every element of $ H $ is a square element in $S_n$, therefore, $ H \subset S_n^2$. If follows from Lemma \ref{lmn2.8} that $ S_n^2 $ is not a subgroup, therefore, $ S_n^2 \setminus H \neq \emptyset.$ Let us consider an involution $ x \in S_n^2 \setminus H $ such that $x = (1\,\,2)(3\,\,4).$ Since $ H $ is of odd-order subgroup, therefore, every nonidentity element in $ H $ forms a distinct pair with its inverse. Thus, $ xH = \{ x,\,\, xy_1,\,\, xy_1^{-1}, \dots, \,\,xy_m, \,\,xy_m^{-1}\}.$ It follows from Lemma \ref{lm 1} that $ |xH \cap cl(\sigma) | \neq 1 $ for non-square $ \sigma \in S_n\setminus H $. Therefore, it follows from Lemma \ref{lmn3.2} that $ H $ is not perfect of $ S_n $ in the Cayley sum graph. 
\end{proof}

Note that one can take $ x $ as a product of any even number of disjoint transpositions in $ S_n$. It is easy to see that any odd-order subgroup of $ S_n$ is also a subgroup of $ A_n $. Thus, the following result follows from Theorem \ref{eqn02} immediately.
\begin{cor}\label{col1 }
Let $ H $ be a non-trivial odd-order subgroup of $ A_n$. Then $ H $ is not a perfect subgroup code of $ A_n $ in the Cayley sum graph.
\end{cor}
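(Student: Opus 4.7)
The plan is to adapt the proof of Theorem~\ref{thm002} with $A_n$ playing the role of $S_n$. First I would observe that an odd-order subgroup $H \leq S_n$ is automatically contained in $A_n$, since every odd-order permutation decomposes into an even number of transpositions; moreover each $y \in H$ of order $2k+1$ is a square inside $A_n$ itself, because $y = (y^{k+1})^2$ with $y^{k+1} \in H \subseteq A_n$. Writing $H = \{1_{S_n}, y_1, y_1^{-1}, \dots, y_m, y_m^{-1}\}$, the nonidentity elements are grouped into $m$ inverse pairs as $|H|$ is odd.

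I would then choose a witness $x \in A_n \setminus H$ following the remark preceding the corollary, namely a product of an even number of disjoint transpositions such as $x = (1\,2)(3\,4)$ when $n \geq 4$; this is an involution lying in $A_n$ but not in $H$. The coset
\[
xH \;=\; \{x,\, xy_1,\, xy_1^{-1},\, \dots,\, xy_m,\, xy_m^{-1}\}
\]
consists of the singleton $\{x\}$ together with the $m$ pairs $\{xy_i,\, xy_i^{-1}\}$. Lemma~\ref{lm 1} yields $cl(xy_i) = cl(xy_i^{-1})$ as $S_n$-conjugacy classes, and since both conjugating elements $y_i$ and $x$ lie in $A_n$, the identity $xy_i = y_i^{-1}(y_ix)y_i$ upgrades the conjugacy so it holds in $A_n$. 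Hence $|cl(z) \cap xH| \geq 2$ for every $z$ of the form $xy_i^{\pm 1}$, with $cl$ now denoting the $A_n$-class.

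Finally I would apply Lemma~\ref{lmn3.2}(ii) to the coset $xH$: the non-square elements $z = xy_i^{\pm 1}$ meet $|cl(z) \cap wH| > 1$ by taking $w = x$, while the leftover element $x$ is handled by the case split inherited from the theorem---Lemma~\ref{lmn2.8} controls the set of squares in $A_n$ when $n \leq 5$, and for $n > 5$ a direct conjugate-collision argument places $x$ inside a paired $A_n$-class. The degenerate case $n = 3$ (where $H = A_n$ is the only possibility) would be treated separately by noting that $|A_n : H| = 1$ forces the connecting set to be empty, contradicting Lemma~\ref{lmn3.1}(iii). The step I expect to be most delicate is the transfer from $S_n$- to $A_n$-conjugacy, since an $S_n$-class splits in $A_n$ exactly when the permutation has pairwise distinct odd cycle lengths; to preserve the pairing I would choose the number of transpositions in $x$ so that $xy_i$ always contains either an even-length cycle or a repeated cycle length, thereby preventing the split and closing the argument.
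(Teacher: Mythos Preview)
Your adaptation follows the paper's own line exactly—the paper merely asserts that the corollary is immediate from Theorem~\ref{thm002} once one notes that $x=(1\,2)(3\,4)\in A_n$. You are more careful than the paper in worrying about the passage from $S_n$- to $A_n$-conjugacy, but that particular worry evaporates on its own: any \emph{non-square} $z\in A_n$ must have even order (an odd-order element is the square of a power of itself inside $A_n$), so its cycle type cannot consist of pairwise distinct odd lengths, and therefore its $S_n$-class does not split in $A_n$. Lemma~\ref{lm 1} thus upgrades to $A_n$ automatically for every element to which you actually need to apply it, and your proposed cycle-length bookkeeping on $x$ is unnecessary.

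The real gap is your handling of the lone element $x$ itself when $n\le 5$. You invoke Lemma~\ref{lmn2.8}, but that lemma describes $S_n^2$, not $A_n^2$, and the two differ: for $n=4,5$ the involution $(1\,2)(3\,4)$ is \emph{not} a square in $A_n$ (there is no order-$4$ element in $A_4$ or $A_5$). So $x$ must be disposed of via the collision clause of Lemma~\ref{lmn3.2}, and here the argument breaks irreparably at $n=4$: the class $cl_{A_4}(x)=\{(1\,2)(3\,4),(1\,3)(2\,4),(1\,4)(2\,3)\}$ together with $\{1\}$ is the Klein four-group $V$, which is a transversal of $H=\langle(1\,2\,3)\rangle$ in $A_4$ and hence meets every coset in exactly one point and misses $H$. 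Taking $S=V\setminus\{1\}$ therefore exhibits $H$ as a genuine perfect code in $CayS(A_4,S)$, so the corollary as stated is \emph{false} for $n=4$. (Your $n=3$ argument is also off: $A_3$ with $S=\emptyset$ is a perfect code of itself, so Lemma~\ref{lmn3.1} is not contradicted.) The paper's one-line derivation shares exactly the same defect.
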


\begin{theorem}
Let $ H $ be a nontrivial even order subgroup of $ S_n $ such that $ I_{min}(H) = 1$. Then $ H $ is not a perfect subgroup code of $ S_n $ in the Cayley sum graph.
\end{theorem}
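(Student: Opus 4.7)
The plan is to apply Lemma \ref{lmn3.2} by exhibiting an element $x \in S_n \setminus H$ whose coset $xH$ meets condition (ii). Since $I_{min}(H) = 1$, after relabeling we may take $\tau = (1\,\,2)$ to be a transposition in $H$. Provided $H$ is a proper subgroup of $S_n$ (which we assume, as otherwise there is nothing to prove), Lemma \ref{kc1} yields a transposition $\mu \in S_n \setminus H$, and I will work with $x = \mu$.

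Given $z \in \mu H$, write $z = \mu h$ for $h \in H$. First, if $h = 1_{S_n}$, then $z = \mu$ is a transposition and $\tau \in cl(\mu) \cap H$, so condition (ii) is satisfied via $cl(z) \cap H \neq \emptyset$. Next, if $h \in H$ is not an involution, then $h \neq h^{-1}$, so $\mu h,\,\mu h^{-1}$ are two distinct elements of $\mu H$; since $\mu \notin H$ forces $\mu \neq h^{-1}$, Lemma \ref{lm 1} gives $cl(\mu h) = cl(\mu h^{-1})$, hence $|cl(z) \cap \mu H| \geq 2$ and condition (ii) again holds.

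The delicate remaining case is $h \in I(H) \setminus \{1_{S_n}\}$, in which $h = h^{-1}$ and the pairing argument collapses. Here my plan is to exploit $\tau \in H$: the element $\mu h \tau$ also lies in $\mu H$, and Lemma \ref{lm 1} gives $cl(\mu h\tau) = cl(\tau\mu h)$. If $\mu$ is chosen disjoint from $\tau$, then $\mu$ commutes with $\tau$ and $\mu\tau\mu = \tau \in H$ by Lemma \ref{lmn2.9}, yielding $\tau\mu h \in \mu H$; then $\mu h$ and $\mu h\tau$ are two conjugate elements of $\mu H$, distinct precisely when $\tau$ does not commute with $\mu h$, which by Remark \ref{rmk01} happens whenever $|\Delta(\tau)\cap \Delta(\mu h)|$ is odd.

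The principal obstacle is twofold. First, if no transposition disjoint from $\tau$ lies outside $H$, then $S_{\{3,\ldots,n\}} \subseteq H$, and I must instead take $\mu = (1\,\,c) \notin H$ (existing by Lemma \ref{kc2} since $H \neq S_n$), track the conjugate $\mu\tau\mu = (c\,\,2)$ via Lemma \ref{lmn2.9}, and branch on whether $(c\,\,2) \in H$. Second, in the exceptional sub-case where $\mu h$ commutes with $\tau$, Lemma \ref{lmn2.10} tightly restricts the cycle structure of $h$; one then argues separately that either $\mu h$ is a square (so no treatment is needed) or exhibits an involution in $H$ of the same cycle type as $\mu h$, giving $cl(\mu h) \cap H \neq \emptyset$.
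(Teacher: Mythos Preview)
Your outline has the right architecture but two concrete defects.

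First, a slip in the involution paragraph: what you actually establish via Lemma~\ref{lm 1} and the commutation $\mu\tau=\tau\mu$ is that $\mu h\tau$ and $\tau\mu h=\mu\tau h$ are conjugate elements of $\mu H$, distinct when $\tau$ fails to commute with $\mu h$. That is a statement about $cl(\mu h\tau)$, not about $cl(\mu h)$. The argument you presumably intend is to conjugate $\mu h$ itself by $\tau$, obtaining $\tau(\mu h)\tau=\mu(\tau h\tau)\in\mu H$; this pair does lie in $cl(\mu h)$ and is distinct precisely when $\tau h\neq h\tau$. Please rewrite the paragraph accordingly.

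Second, the ``exceptional sub-case'' is a genuine gap, not a detail. Your proposed dichotomy (``either $\mu h$ is square or $H$ contains an involution of the same cycle type'') fails once $\mu$ meets $\Delta(h)$. Take $n=6$, $H=\langle(1\,2),(3\,4)(5\,6)\rangle$, $\tau=(1\,2)$, and $\mu=(3\,5)$ (disjoint from $\tau$, outside $H$). For $h=(3\,4)(5\,6)$ one has $\tau h=h\tau$, and $\mu h=(3\,4\,5\,6)$ is a $4$-cycle: it is not a square in $S_6$, and $H$ contains no $4$-cycle, so neither branch of your dichotomy applies. One must still manufacture a $w$ with $|cl(\mu h)\cap wH|>1$ by hand, and your plan gives no mechanism for this. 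Note also that the difficulty is sensitive to the choice of $\mu$ (here $\mu=(3\,4)$ would have worked), but your proposal fixes $\mu$ once via Lemma~\ref{kc1} with no selection rule.

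The paper sidesteps both issues by choosing the coset representative to \emph{share} a point with $\tau$ --- taking $z=(1\,n)\notin H$ with $1\in\Delta(\tau)$ --- and then running an explicit case analysis on $|\Delta(z)\cap\Delta(y)|\in\{0,1,2\}$ for each involution $y\in H$, constructing a tailored $w$ (or an $H$-element of the right class) in every branch. The overlap $1\in\Delta(z)\cap\Delta(\tau)$ is what makes the subcases tractable; your disjoint-$\mu$ strategy trades a clean generic case for an exceptional case that is no easier than the whole problem.
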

\begin{proof}
\noindent Without loss of generality, let $ x = (1\,\,2)\in H $, $ y\in I_r(H) $, $ \Delta(I(H)) = \{1,~2,\dots,~l\}$ and $ J(H) = \{ 1_{S_n}, y_1, y_1^{-1}, \dots, y_m, y_m^{-1}\}.$ Since $ H \neq S_n$, then it follows from Lemmas \ref{kc1}, \ref{kc2} that there exists $ (a\,\,b) \in S_n $ such that $ (a\,\,b)\notin H $, where $ a\in \Delta(H)$. Without loss of generality, let $ z = (a\,\,b) = (1\,\,n).$ It follows from Lemma \ref{lm 1} that $| cl(zy_i)\cap zH|>1$ for $ 1\leq i \leq m $ and $ cl(z)\cap H \neq\emptyset.$ If $ |I(H)| = 1 $, then $cl(z)\cap H \neq \emptyset $ as $ cl(z) = cl(x) .$  Consequently, we are done due to Lemma \ref{lmn3.2}.
\vspace{0.1cm}

\noindent  Suppose $ |I(H)|>1 $. We claim that for every non square element $\sigma \in zH $, there exists $ w \in S_n $ such that $ |cl(\sigma)\cap wH|>1 $ or $ cl(\sigma)\cap H \neq \emptyset $ or both. To proceed further, we consider the following cases.

\vspace{0.1cm}

\noindent\textbf{Case I:} When $ |\Delta(z)\cap\Delta(y)| = 0 .$

\noindent Let $ y = (i_1\,\,i_2)\cdots(i_{2r-1}\,\,i_{2r}) $. Then $ zy = (1\,\,n)(i_1\,\,i_2)\cdots(i_{2r-1}\,\,i_{2r}) $. If $ r $ is odd, then $ zy $ is square element. Consequently, we are done. Suppose $ r $ is even. Let us consider the involution $ w = (i_1\,\,i_3)\cdots(i_{2r-2}\,\,i_{2r}) $. Now, $ wz = (1\,\,n)(i_1\,\,i_3)\cdots(i_{2r-2}\,\,i_{2r}) $ and $ wzy = (1\,\,n)(i_1\,\,i_4)(i_2\,\,i_3)\cdots(i_{2r-2}\,\,i_{2r-1})(i_{2r-3}\,\,i_{2r}).$ This shows that $ cl(wz) = cl(wzy)$. Therefore, we are done.
\vspace{0.1cm}

\noindent\textbf{Case II:} When $ |\Delta(z)\cap\Delta(y)| = 1.$

\noindent  Without loss of generality, let $ y = (1 \,\,i_2)(i_3\,\,i_4)\cdots(i_{2r-1}\,\,i_{2r}).$ Then $ zy = (1\,\,i_2\,\,n)(i_3\,\,i_4)\cdots(i_{2r-1}\,\,i_{2r}).$ If $ r $ is odd, then $ zy $ is a square element. Consequently, we are done. Suppose $ r $ is even. Let us consider the involution $ w = (1\,\,i_4)(i_2\,\,i_3)w_1$, where $w_1 = (i_5\,\,i_7)\cdots(i_{2r-2}\,\,i_{2r}).$ Then $ wz = (1\,\,n\,\,i_4)(i_2\,\,i_3) w_1 $ and $ w(zy) = (i_2\,\,n\,\,i_4)(1\,\,i_3)(i_5\,\,i_8)(i_6\,\,i_7)\cdots(i_{2r-1}\,\,i_{2r-1})(i_{2r-3}\,\,i_{2r}) $. This shows that $ cl(wz) = cl(wzy) = cl(zy) $. Therefore, we are done.
\vspace{0.1cm}

\noindent\textbf{Case III:} When $ |\Delta(z)\cap\Delta(y)| = 2.$

\noindent Suppose $ z $ and $ y $ share a common transposition. Without loss of generality, let $ y = (1\,\,n)(i_3\,\,i_4)\cdots(i_{2r-1}\,\,i_{2r}).$ Then $ zy = (i_3\,\,i_4)\cdots(i_{2r-1}\,\,i_{2r})$. If $ r $ is odd, then $ zy $ is square element. Consequently, we are done. Suppose $ r $ is even. Let us consider the involution $ w = (i_5\,\,i_7)(i_6\,\,i_8)\cdots (i_{2r-2}\,\,i_{2r})$. Then $ wz = (1\,\,n)(i_5\,\,i_7)(i_6\,\,i_8)\cdots (i_{2r-2}\,\,i_{2r})$ and $ w(zy) = (i_3\,\,i_4)(i_5\,\,i_8)(i_6\,\,i_7)\cdots(i_{2r-2}\,\,i_{2r-1})(i_{2r-3}\,\,i_{2r}).$ This shows that $ cl(wz) = cl(wzy) = cl(zy)$, Therefore, we are done.

\noindent Suppose $ z $ and $ y $ do not share a common transposition. Without loss of generality, let $ y = (1\,\,i_2)(n\,\,i_4)(i_5\,\,i_6)\cdots(i_{2r-1}\,\,i_{2r}).$ To proceed further, we consider the following two subcases.
 \vspace{0.1cm}
 
\noindent\textbf{Subcase A:} When $ |\Delta(x)\cap\Delta(y)| = 1 $.

\noindent Now, $ zy = (1\,\,i_2\,\,n\,\,i_4)(i_5\,\,i_6)\cdots(i_{2r-1}\,\,i_{2r}).$ It is clear that $ xy \in H $. Then $ (xy)^2 = (1\,\,2\,\,i_2)\in H$ and $ (xy)^{-2} = (1\,\,i_2\,\,2) \in H$ as $ H $ is subgroup of $ S_n.$ Consequently, $ (1\,\,2\,\,i_2)xy = (n\,\,i_4)(i_5\,\,i_6)\cdots(i_{2r-1}\,\,i_{2r})\in H$. Then $ x(1\,\,2\,\,i_2)xy \in H.$ Then $ zx(1\,\,2\,\,i_2)xy = (1\,\,2\,\,n\,\,i_4)(i_5\,\,i_6)\cdots(i_{2r-1}\,\,i_{2r}) \neq zy .$ This shows that $ cl(zy) = cl(zx(1\,\,i_2\,\,2)xy) $. Therefore, we are done.


\vspace{0.1cm}

\noindent\textbf{Subcase B:} When $ |\Delta(x)\cap\Delta(y)| = 2 $.

\noindent Suppose $ x$ and $ y $ share a transposition. Then $ y = (1\,\,2)(n\,\,i_4)(i_5\,\,i_6)\cdots(i_{2r-1}\,\,i_{2r})$. Now, $ zy = (1 \,\,2\,\,n\,\,i_4)(i_5\,\,i_6)\cdots (i_{2r-1}\,\,i_{2r}).$ It is clear that $ xy = (n\,\,i_4)\cdots(i_{2r-1}\,\,i_{2r})\in H $.


\noindent Now, consider the element $ w = (1\,\,n\,\,2\,\,i_4)[(i_5\,\,i_7)\cdots(i_{2r-2}\,\,i_{2r})].$ Then $ wy = (1\,\,i_4\,\,2\,\,n)(i_5\,\,i_8)(i_6\,\,i_7)\cdots(i_{2r-2}\,\,i_{2r-1})(i_{2r-3}\,\,i_{2r}).$ This shows that $ cl(w) = cl(wy) = cl(zy) $. Since $ w, \,\,wy \in wH $, therefore, we are done.

\vspace{0.1cm}

\noindent Suppose $ x$ and $ y $ do not share a transposition. Without loss of generality, let $ y = (1\,\,i_2)(n\,\,2)(i_5\,\,i_6)\cdots(i_{2r-1}\,\,i_{2r})$. Then $ zy = (1\,\,i_2\,\,n\,\,2)(i_5\,\,i_6)\cdots(i_{2r-1}\,\,i_{2r})$. Now, $ xy = (1\,\,i_2\,\,2\,\,n)(i_5\,\,i_6)\cdots(i_{2r-1}\,\,i_{2r}).$ This shows that $ cl(xy) =  cl(zy) $. Consequently, $ cl(zy)\cap H \neq \emptyset$ and this completes thi case.

\vspace{0.1cm}

\noindent It follows from the above cases that our claim is justified. Hence, every element of $zH$ is square in $S_n$ for some $ z\in S_n\setminus H $; or for every non-square element $\sigma \in zH$, there exists $w\in S_n $ such that $|Cl(\sigma)\cap wH| > 1$ and/or $ cl(\sigma)\cap H \neq\emptyset$.. Consequently, $H$ is not a perfect subgroup code of $S_n$ in the Cayley sum graph due to Lemma \ref{lmn3.2} and this completes the proof.
\end{proof}

\begin{theorem}\label{thm004}
Let $ H $ be a nontrivial even order subgroup of $ S_n $ such that $ I_{min}(H) = 2$. Then $ H $ is not a perfect subgroup code of $ S_n $ in the Cayley sum graph.
\end{theorem}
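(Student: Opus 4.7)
The plan is to apply Lemma \ref{lmn3.2} via its condition (ii): I will exhibit a single coset $zH$ with $z \in S_n \setminus H$ and verify, element by element, that every non-square $\sigma \in zH$ admits either an element $w \in S_n$ with $|cl(\sigma) \cap wH| > 1$ or a representative of $cl(\sigma)$ already in $H$. Since $I_{min}(H) = 2$, after relabeling I may assume $x = (1\,\,2)(3\,\,4) \in H$, and $I_1(H) = \emptyset$ forces $H$ to contain no transposition at all. Consequently $H$ is a proper subgroup of $S_n$, so I may fix a transposition $z = (a\,\,b) \notin H$, choosing $a, b$ to interact with $\Delta(H)$ in a way suited to the case analysis (for instance, with $a \in \Delta(x)$ and $b$ outside $\Delta(H)$ whenever such a choice is available).

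The non-involution elements of $H$ are handled uniformly. Since $z$ is an involution, any non-involution $y \in H$ satisfies $y \neq z^{-1}$, so Lemma \ref{lm 1} yields $cl(zy) = cl(zy^{-1})$. Both $zy$ and $zy^{-1}$ lie in $zH$ and are distinct, hence $|cl(zy) \cap zH| \geq 2$ immediately, so no further work is needed for these elements.

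The bulk of the work then lies in the analysis of the involutions $y \in I(H)$. I would stratify by $|\Delta(z) \cap \Delta(y)|$, which can only be $0$, $1$, or $2$ since $z$ is a transposition, and within each stratum refine further by how $y$ interacts with the fixed involution $x$. In each subcase one either observes that $zy$ is already a square in $S_n$, invoking the parity criterion that $\sigma$ is a square iff each even-length cycle in its cycle decomposition has even multiplicity, or one constructs $w \in S_n$, typically an involution with carefully chosen disjoint transpositions, such that two elements of the coset $wH$, for example $w$ and $wy$, or $wz$ and $wzy$, share the cycle type of $zy$. The freedom to multiply by $x \in H$ supplies the second witness inside the prospective coset, in direct analogy with the construction of $w$ in the $I_{min}(H) = 1$ proof.

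The main obstacle will be the bookkeeping in the involution case, which is genuinely richer than for $I_{min}(H) = 1$. Because $x$ already contributes two transpositions, $zy$ can land in cycle types such as $(3,2,2,\ldots)$ or $(4,2,2,\ldots)$ depending on whether $z$ overlaps $\Delta(x)$ and on the shape of $y$, so the square/non-square dichotomy branches more often. For the non-square cases, the auxiliary element $w$ must be tuned to match the cycle type of $zy$ after left multiplication by $y$, which requires a cycle-by-cycle construction differing between the subcases where $y$ shares zero, one, or two of its transpositions with $z$; one must also verify that the two candidate elements of $wH$ are distinct. Once those subcases are completed, Lemma \ref{lmn3.2} closes the argument.
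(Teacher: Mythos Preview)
Your overall strategy matches the paper's: invoke Lemma~\ref{lmn3.2}(ii), dispose of non-involutions via Lemma~\ref{lm 1}, and analyze the involutions $y\in I(H)$ case by case. However, the paper's proof pivots on a structural dichotomy you have not anticipated. Before choosing any coset, the paper asks: does there exist $(i\,j)(k\,l)\in H$ with $(i\,k)(j\,l)\notin H$? In Case~I (yes) it takes $z=(1\,3)(2\,4)$, a product of \emph{two} transpositions rather than one; in Case~II (no) it takes $z=(1\,2)$. In both cases the chosen $z$ commutes with $x=(1\,2)(3\,4)$, and this commutation is exactly what powers the ``multiply by $x$'' idea: from $xz=zx$ one gets $x(zy)x^{-1}=z(xyx^{-1})$, so the conjugate lands back in the \emph{same} coset $zH$, giving two witnesses whenever $xyx^{-1}\neq y$.

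Your suggested transposition $z=(a\,b)$ with $a\in\Delta(x)$ and $b\notin\Delta(H)$, modeled on the $I_{min}(H)=1$ argument, does \emph{not} commute with $x$ (e.g.\ $(1\,n)(1\,2)(3\,4)\neq(1\,2)(3\,4)(1\,n)$), so $x(zy)x^{-1}\notin zH$ in general and your second-witness mechanism collapses. If instead you take $z=(1\,2)$ so that $zx=xz$, you are in the paper's Case~II setup, but then one subcase --- namely $|\Delta(z)\cap\Delta(y)|=0$, $x$ and $y$ sharing the transposition $(3\,4)$, with $\mathcal N(y)$ even --- has $xyx^{-1}=y$, and the paper explicitly uses the Case~II hypothesis $(1\,3)(2\,4)\in H$ to produce a second element of $zH$ in $cl(zy)$. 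In a Case~I situation that hypothesis is false and a different $w$ must be built. So while a uniform single-transposition proof may be salvageable, your sketch neither identifies this obstruction nor the dichotomy the paper uses to resolve it.
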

\begin{proof}
let $ J(H) = \{ 1_{S_n}, y_1, y_1^{-1}, \dots, y_m, y_m^{-1}\}.$ It follows from Lemma \ref{lm 1} that $| cl(zy_i)\cap zH|>1$ for any $ z \in I(S_n)$, for $ 1\leq i \leq m $.

\vspace{0.1cm}

\noindent We claim that for every non square element $\sigma \in zH $ for some $ z\in I(S_n\setminus H)$, there exists $ w \in S_n $ such that $ |cl(\sigma)\cap wH|>1 $ or $ cl(\sigma)\cap H = \emptyset$ or both.

\vspace{0.1cm}

\noindent Now, we divide this proof into two parts.
\vspace{0.1cm}

\noindent\textbf{Case I:} Suppose there exists $ (i\,\,j)(k\,\,l)\in H $ such that $ (i\,\,k)(j\,\,l)\notin H $.

\noindent Without loss of generality, let $ x =(1\,\,2)(3\,\,4)\in H $ and $ z = (1\,\,3)(2\,\,4)\notin H.$ Suppose $ y \in I_r(H)$. If $ y = x $, then $ cl(z) = cl(zy)$. Consequently, we are done as $| cl(zy_i)\cap zH|>1$ and $ cl(z)\cap H \neq \emptyset$.

Suppose $ y\neq x$. It is clear that $ xz= zx.$ To proceed further, we consider following subcases.
\vspace{0.1cm}

\noindent\textbf{Subcase A:} When $ |\Delta(z)\cap\Delta(y)| = 0.$

\noindent Then $cl(zy) = cl(xy)$. Consequently, we are done.
\vspace{0.1cm}

\noindent\textbf{Subcase B:} When $ |\Delta(z)\cap\Delta(y)| = 1.$ 

\noindent Without loss of generality, let $ y = (1\,\,i_2)\cdots(i_{2r-1}\,\,i_{2r})$. Now, $ x(1\,\,i_2)x = (1\,\,2)(3\,\,4)(1\,\,i_2)(1\,\,2)(3\,\,4) = (2\,\,i_2) \neq (1\,\,i_2).$ Consequently, $ xyx\neq y.$. Therefore, $ zy $ and $ zxyx$ are distinct elements in $ zH$. It is clear that $ cl(zy) = cl(zxyx)$ as $ xz=zx$. Hence, we are done.
\vspace{0.1cm}

\noindent\textbf{Subcase C:} When $ |\Delta(z)\cap\Delta(y)| = 2.$

\noindent Suppose $ x $ and $ y $ do not share a transposition. Then $ y $ contains one of transpositions $ (1\,\,3),~ (2\,\,4),~(1\,\,4),~(2\,\,3)$. Consequently, $ xyx \neq  y $. Therefore, $ zy $ and $ zxyx$ are distinct elements in $ zH$. It is clear that $ cl(zy) = cl(zxyx)$ as $ xz=zx$. Hence, we are done.

\vspace{0.1cm}

\noindent Suppose $ x $ and $ y $ share a transposition. Without loss of generality, let $ y = (1\,\,2)(i_3\,\,i_4)\cdots(i_{2r-1}\,\,i_{2r}) $. Then $ zy = (1\,\,4\,\,2\,\,3)(i_3\,\,i_4)\cdots(i_{2r-1}\,\,i_{2r})$ and $ zxy = (1\,\,3\,\,2\,\,4)(i_3\,\,i_4)\cdots(i_{2r-1}\,\,i_{2r})$. This shows that $ cl(zy) = cl(zxy) $. Since $ zy,~zxy $ are distinct elements in $ zH $, therefore, we are done.

\vspace{0.1cm}

\noindent\textbf{Subcase D:} When $  |\Delta(z)\cap \Delta(y)| = 3 .$

\noindent Then there exists a transposition $(a_1\,\,a_2) $ of $ y $ such that $ a_1 \in\Delta(z) $ and $ a_2\notin \Delta(z)$. Consequently, $ x(a_2) = a_2$ as $ \Delta(z)= \Delta(x)$. Suppose $ xyx = y$. Then $ (xyx)(a_2) = y(a_2) = a_1$. It implies that $ x(a_1) = a_1 $, which is a contradiction as $ a_1\in \Delta(x).$ Hence, $ xyx \neq y $. Thus, $ zy \neq z(xyx)$. Let $zy\in cl(g) $ for some $ g\in S_n $. Then $ x(zy)x^{-1} \in cl(g)$. Since $ xz = zx $, therefore, $ z(xyx) \in cl(g)$. Consequently, $ cl(zy) = cl(zxyx).$ Therefore, we are done.

\vspace{0.1cm}

\noindent\textbf{Subcase E:} When $  |\Delta(z)\cap \Delta(y)| = 4 .$

\noindent Suppose $ y $ contains one of $(1\,\,4)(2\,\,3)$, $  (1\,\,2)(3\,\,4)$. Then $ cl(zy) = cl(y)$. Therefore, we are done.  

\noindent Suppose $ z $ acts nontrivially atleast three transpositions of $ y $. Then there exists a transposition $(a_1\,\,a_2) $ of $ y $ such that $ a_1 \in\Delta(z) $ and $ a_2\notin \Delta(z)$. It follows from the aforementioned subcase that $ xyx\neq y $. Then $ zy $ and $ zxyx $ are distinct elements in $ zH$ with $ cl(zy) = cl(zxyx)$ as $ xz = zx$. Hence, we are done. 
\vspace{0.05cm}

\noindent Suppose $ z $ acts two transpositions of $ y $. Without loss of generality, let $  y = (1\,\,3)(2\,\,4)(i_1\,\,i_2)\cdots(i_{2r'-1}\,\,i_{2r'}).$ Then $ zy = (i_1\,\,i_2)\cdots(i_{2r'-1}\,\,i_{2r'})$. Here, \begin{equation}\label{eqn01}
2+r' = r.
\end{equation}
 If $ r = 2 $, then it follows from equation (\ref{eqn01}) that $ r' = 0 .$ Consequently, $ y = x$. Thus, we arrive at a contradiction. Hence, $ r >2$.
 
\noindent Suppose $ r $ is even. Then it follows from equation (\ref{eqn01}) that $r'$ is even. Therefore, $ zy $ is a square element. Consequently, we are done.

\vspace{0.1cm}

\noindent\textbf{Case II:} Suppose $ (i\,\,k)(j\,\,l),~(i\,\,l)(j\,\,k) \in H $ for every $ (i\,\,j)(k\,\,l)\in H $.

\noindent Without loss of generality, let $ x = (1\,\,2)(3\,\,4)\in H $ and $z = (1\,\,2) \in S_n $. Then $xz =zx $ and $ cl(z) = cl(zx) $. It is clear that $ z\notin H $ as $ I_{min}(H) = 2 $. Let $ y \in I_r(H).$ 

\vspace{0.1cm}

\noindent To proceed further, we consider the following subcases.

\vspace{0.1cm}

\noindent\textbf{Subcase A:} When $ |\Delta(z)\cap\Delta(y)| = 0 .$

\vspace{0.1cm}

\noindent Suppose $ |\Delta(x)\cap\Delta(y)| = 0 $, then $ cl(zy ) = cl(zxy)$. Consequently, we are done.
\vspace{0.1cm}

\noindent Suppose $ |\Delta(x)\cap\Delta(y)| = 1 $. Then it follows from Remark \ref{rmk01} that $ xy \neq yx $. Therefore, $zy$ and $ zxyx $ are distinct elements in $ zH $ and $ cl(zy) = cl(zxyx) $ as $ xz = zx $. Consequently, we are done. 
\vspace{0.1cm}

\noindent Suppose $ |\Delta(x)\cap\Delta(y)| = 2  $. If $ x$ acts nontrivially on two transpositions of $ y $, then $ xy\neq yx $. Therefore, $ xy \neq yx $ due to Remark \ref{rmk01}. Then $zy$ and $ zxyx $ are distinct elements in $ zH $ and $ cl(zy) = cl(zxyx) $ as $ xz = zx $. Consequently, we are done. 

\noindent Suppose $ x$ and $ y $ share exactly one transposition. Without loss of generality, let $ y = (3\,\,4)(i_1\,\,i_2)\cdots(i_{2r'-1}\,\,i_{2r'}) $. Then \begin{equation}\label{eqn02}
    1+r' = r.
\end{equation}  
Now, $ zy = (1\,\,2)(3\,\,4)(i_1\,\,i_2)\cdots(i_{2r'-1}\,\,i_{2r'})$. If $ r $ is odd, then $ zy $ is square. Consequently, we are done. Suppose $ r $ is even. Then it follows from equation (\ref{eqn02}) that $ r' $ is odd. Since $xy = (1\,\,2)(i_1\,\,i_2)\cdots(i_{2r'-1}\,\,i_{2r'}) \in H $ and $ (1\,\,3)(2\,\,4)\in H$, therefore, $ (1\,\,4\,\,2\,\,3)(i_1\,\,i_2)\cdots(i_{2r'-1}\,\,i_{2r'})\in H$. Consequently, $ z[(1\,\,4\,\,2\,\,3)(i_1\,\,i_2)\cdots(i_{2r'-1}\,\,i_{2r'})]\in zH $. This implies that $ (1\,\,4)(2\,\,3)(i_1\,\,i_2)\cdots(i_{2r'-1}\,\,i_{2r'})\in zH$. It shows that $ cl(zy) = cl((1\,\,4)(2\,\,3)(i_1\,\,i_2)\cdots(i_{2r'-1}\,\,i_{2r'}))$. Therefore, we are done. 

\vspace{0.1cm}

\noindent\textbf{Subcase B:} When $ |\Delta(z)\cap\Delta(y)| = 1 $.

\noindent Suppose $ |\Delta(x)\cap\Delta(y)| = 1 $. Then it follows from Remark \ref{rmk01} that $ xy \neq yx $. Therefore, $zy$ and $ zxyx $ are distinct elements in $ zH $ and $ cl(zy) = cl(zxyx) $ as $ xz = zx $. Consequently, we are done. 
\vspace{0.1cm}

\noindent Suppose $ |\Delta(x)\cap\Delta(y)| = 2  $. If $ x$ acts nontrivially on two transpositions of $ y $, then $ xy\neq yx $. Then it follows from Remark \ref{rmk01} that $ xy \neq yx $. Therefore, $zy$ and $ zxyx $ are distinct elements in $ zH $ and $ cl(zy) = cl(zxyx) $ as $ xz = zx $. Consequently, we are done.  

\noindent Suppose $ x$ and $ y $ share exactly one transposition. Without loss of generality, let $ y = (1\,\,2)(i_1\,\,i_2)\cdots(i_{2r'-1}\,\,i_{2r'}) $, where \begin{equation*}
      1+r' = r.
\end{equation*}  
Then $ zy = (i_1\,\,i_2)\cdots(i_{2r'-1}\,\,i_{2r'})$. Suppose $ r $ is odd, then $ zy $ is square. Consequently, we are done. Suppose $ r $ is even. Let us consider the involution $ w = (3\,\,4)(i_1\,\,i_2) $. Then $ w(zy) = (3\,\,4)(i_3\,\,i_4)\cdots(i_{2r'-1}\,\,i_{2r'})$ and $ w(zxy) = (1\,\,2)(i_3\,\,i_4)\cdots(i_{2r'-1}\,\,i_{2r'}) $. This shows that $ cl(wzy) = cl(wzxy)$. Therefore, we are done.
\vspace{0.1cm}

\noindent Suppose $ |\Delta(x)\cap\Delta(y)| = 3  $. Therefore, $ xy \neq yx $ due to Remark \ref{rmk01}. Then $zy$ and $ zxyx $ are distinct elements in $ zH $ and $ cl(zy) = cl(zxyx) $ as $ xz = zx $. Consequently, we are done. 

\vspace{0.1cm}

\noindent\textbf{Subcase C:} When $ |\Delta(z)\cap\Delta(y)| = 2 $.

\noindent \noindent Suppose $ |\Delta(x)\cap\Delta(y)| = 2  $. Suppose $ z $ acts nontrivially on two transpositions of $ y $. Without loss of generality, let $ y = (1\,\,i_2)(2\,\,i_4)(i_5\,\,i_6)\cdots(i_{2r-1}\,\,i_{2r}).$ Then $ xy \neq yx $, which is a contradiction as $ xy=yx.$ Hence, $ z $ and $ y $ share a common transposition. Without loss of generality, let $ y = (1\,\,2)(i_3\,\,i_4)\cdots(i_{2r-1}\,\,i_{2r}).$ Then $ cl(zy) = cl(xy) $. Consequently, we are done.
\vspace{0.1cm}

\noindent Suppose $ |\Delta(x)\cap\Delta(y)| = 3  $. Therefore, $ xy \neq yx $ due to Remark \ref{rmk01}. Then $zy$ and $ zxyx $ are distinct elements in $ zH $ and $ cl(zy) = cl(zxyx) $ as $ xz = zx $. Consequently, we are done. 
\vspace{0.1cm}

\noindent Suppose $ |\Delta(x)\cap\Delta(y)| = 4  $. 

\noindent Suppose $ x $ acts nontrivially on three transpositions of $ y $. Then $ xyx\neq y $. Therefore, $zy$ and $ zxyx $ are distinct elements in $ zH $ and $ cl(zy) = cl(zxyx) $ as $ xz = zx $. Consequently, we are done.

\noindent Suppose $ x $ acts nontrivially on two transpositions of $ y $. Without loss of generality, let $ y = (1\,\,2)(3\,\,4)(i_1\,\,i_2)\cdots(i_{2r'-1}\,\,i_{2r'})$, where \begin{equation*}
    2+r' = r .
\end{equation*}

\noindent Then $ zy = (3\,\,4)(i_1\,\,i_2)\cdots(i_{2r'-1}\,\,i_{2r'})$ and $ zxy = (1\,\,2)(i_1\,\,i_2)\cdots(i_{2r'-1}\,\,i_{2r'})$. This shows that $ cl(zy) =  cl(zxy) $. Consequently, we are done and this completes this case.

\vspace{0.1cm}

\noindent It follows from the above cases that our claim is justified. Hence, every element of $zH$ is square in $S_n$ for some $ z\in S_n\setminus H $; or for every non-square element $\sigma \in zH$, there exists $w\in S_n $ such that $|Cl(\sigma)\cap wH| > 1$ and/or $ cl(\sigma)\cap H \neq\emptyset$. Consequently, $H$ is not a perfect subgroup code of $S_n$ in the Cayley sum graph due to Lemma \ref{lmn3.2} and this completes the proof.
\end{proof}

\begin{theorem}
Let $ H $ be a nontrivial even order subgroup of $A_n $ such that $ I_{min}(H) = 2$. Then $ H $ is not a perfect subgroup code of $ A_n $ in the Cayley sum graph.    
\end{theorem}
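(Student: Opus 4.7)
My plan is to invoke Lemma \ref{lmn3.2} with the ambient group taken to be $A_n$, mirroring Theorem \ref{thm004} but with an even coset representative. The task is to produce $z \in A_n \setminus H$ such that either every element of $zH$ is a square in $A_n$, or for every non-square $\sigma \in zH$ there is $w \in A_n$ with $|cl_{A_n}(\sigma) \cap wH| > 1$ or $cl_{A_n}(\sigma) \cap H \neq \emptyset$.

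I would split into the same two cases as in Theorem \ref{thm004}, according to whether $I_2(H)$ is closed under the Klein-four partner map $(i\,\,j)(k\,\,l) \mapsto (i\,\,k)(j\,\,l)$. In the first case, where some $(i\,\,j)(k\,\,l) \in H$ has partner $(i\,\,k)(j\,\,l) \notin H$, I take $x = (1\,\,2)(3\,\,4) \in H$ and $z = (1\,\,3)(2\,\,4)$; both are even, so $z \in A_n \setminus H$. The five subcases of Theorem \ref{thm004}'s Case I (indexed by $|\Delta(z) \cap \Delta(y)| \in \{0,1,2,3,4\}$) transfer directly, because the auxiliary elements $zxyx$ and the small involutions built from the cycles of $y$ are products of even permutations and remain in $A_n$, while the conjugacies invoked relate permutations whose cycle types contain either a repeated $2$-cycle or a $4$-cycle, so that $S_n$- and $A_n$-conjugacy coincide for them.

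The delicate part is the second case: every $(i\,\,j)(k\,\,l) \in H$ has both Klein-four partners in $H$. The $S_n$ proof used $z = (1\,\,2)$, which is odd. I would replace it with $z = (1\,\,2)(5\,\,6)$ for $n \ge 6$, which commutes with $x$ (yielding $xz = (3\,\,4)(5\,\,6)$) and satisfies $cl_{A_n}(z) = cl_{A_n}(xz)$ since double transpositions form a single $A_n$-conjugacy class. The subcase analysis on $|\Delta(z) \cap \Delta(y)|$, now over $\{0,1,2,3,4\}$ instead of $\{0,1,2\}$, would run in parallel with Theorem \ref{thm004}: Remark \ref{rmk01} and Lemma \ref{lm 1} produce non-commuting conjugators $w = zxyx$ whenever $xy \neq yx$, while the Klein-four closure in $H$ allows $cl_{A_n}(zy)$ to meet $H$ whenever $x$ and $y$ share a transposition.

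The main obstacle I foresee is guaranteeing $z = (1\,\,2)(5\,\,6) \notin H$ for arbitrary $H$ and handling the small-$n$ cases. If $H$ is so dense that this particular $z$ lies in $H$, I would shift to $z = (1\,\,2)(a\,\,b)$ for another pair $\{a,b\}$ disjoint from $\Delta(x)$; properness of $H$ in $A_n$ together with $I_1(H) = \emptyset$ guarantees some such even involution outside $H$. For $n = 4$ with $H = V \subset A_4$, a direct check settles the claim: every element of $A_4 \setminus V$ is a $3$-cycle, hence a square in $A_4$, so condition (i) of Lemma \ref{lmn3.2} applies immediately. For $n = 5$, an explicit verification using the subgroup lattice of $A_5$ (where the relevant subgroups with $I_{min} = 2$ are $V$ and certain copies of $A_4$) suffices, leaving the generic $n \ge 6$ case to the subcase bookkeeping above.
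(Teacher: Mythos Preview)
Your first case (some $(i\,\,j)(k\,\,l)\in H$ with Klein partner outside $H$) matches the paper's Case~II exactly: both reduce to Case~I of Theorem~\ref{thm004} with $x=(1\,\,2)(3\,\,4)\in H$ and $z=(1\,\,3)(2\,\,4)\in A_n\setminus H$.

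In the Klein-closed case, however, the paper takes a genuinely different and simpler route. Rather than trying to preserve a commuting pair $(x,z)$ with $x\in H$, the paper observes that since double transpositions generate $A_n$ (Lemma~\ref{lmn2.6}) and $H$ is proper, some double transposition $z=(a\,\,b)(c\,\,d)$ lies outside $H$; it then takes \emph{this} $z$ as the coset representative and analyzes $zy$ directly. The crucial simplification you are missing is parity: every involution $y\in H\subset A_n$ has $\mathcal N(y)=r$ even, so in the majority of subcases on $|\Delta(z)\cap\Delta(y)|$ the element $zy$ is automatically a square in $A_n$ (a product of an even number of disjoint transpositions, possibly together with a $3$-cycle or a pair of $3$-cycles). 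Only a couple of residual sub-subcases require constructing an auxiliary $w$. This eliminates entirely the need for $xz=zx$ and the attendant conjugation trick.

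Your proposed replacement $z=(1\,\,2)(5\,\,6)$ has a real gap: you have not shown that some $z=(1\,\,2)(a\,\,b)$ with $\{a,b\}\cap\{1,2,3,4\}=\emptyset$ lies outside $H$. The assertion that ``properness of $H$ together with $I_1(H)=\emptyset$ guarantees'' this is not justified; nothing prevents $H$ from containing every such $(1\,\,2)(a\,\,b)$ while still being proper. Even granting such a $z$, the bookkeeping on $|\Delta(z)\cap\Delta(y)|$ (now with $\Delta(z)$ meeting $\Delta(x)$ in exactly $\{1,2\}$ and carrying the extra support $\{5,6\}$) is more intricate than in Theorem~\ref{thm004} and is not carried out. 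The paper's choice of $z$ avoids both issues at once.
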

\begin{proof}
let $ J(H) = \{ 1_{S_n}, y_1, y_1^{-1}, \dots, y_m, y_m^{-1}\}.$ It follows from Lemma \ref{lm 1} that $| cl(zy_i)\cap zH|>1$ for any $ z \in I(A_n)$, for $ 1\leq i \leq m $.

\vspace{0.1cm}

\noindent We claim that for every non square element $\sigma \in zH $ for some $ z\in I(A_n\setminus H)$, there exists $ w \in S_n $ such that $ |cl(\sigma)\cap wH|>1 $ or $ cl(\sigma)\cap H \neq \emptyset$ or both. To proceed further, we consider the following cases.

\vspace{0.1cm}

\noindent\textbf{Case I:} Suppose $ (i\,\,k)(j\,\,l),~(i\,\,l)(j\,\,k) \in H $ for every $ (i\,\,j)(k\,\,l)\in H $.
\vspace{0.1cm}

\noindent If $ H $ contains all product two  transpositions, then $ H = A_n$ due to Lemma \ref{lmn2.6}, which is contradiction. Therefore, there exists $(a\,\,b)(c\,\,d) \in A_n $ such that $ (a\,\,b)(c\,\,d) \notin H.$ Without loss of generality, let $z = (a\,\,b)(c\,\,d) = (1\,\,2)(3\,\,4).$ Let $ y = (i_1\,\,i_2)\cdots(i_{2r-1}\,\,i_{2r})\in I_r(H)$. Since $ H $ is in $ A_n $, therefore, $ r $ is always even. To proceed further, we consider the following subcases.
\vspace{0.1cm}

\noindent\textbf{Subcase A:} When $ |\Delta(z)\cap\Delta(y)| = 0$.

\noindent Then $ zy $ is a square element in $ A_n $ as $ r$ is even. Consequently, we are done.
\vspace{0.1cm}

\noindent\textbf{Subcase B:} When $ |\Delta(z)\cap\Delta(y)| = 1 $.

\noindent Without loss of generality, let $ y = (1\,\,i_2)(i_3\,\,i_4)\cdots (i_{2r-1}\,\,i_{2r}).$ Then $ zy = (1\,\,2)(3\,\,4)(1\,\,i_2)\cdots(i_{2r-1}\,\,i_{2r}) = (1\,\,i_2\,\,2)(3\,\,4)(i_3\,\,i_4)\cdots ( i_{2r-1}\,\,i_{2r} )$. This shows that $ zy $ is a square element in $ A_n $ as $ r $ is even and any $ 3 $ cycle is square element. Hence, we are done. 
\vspace{0.1cm}

\noindent\textbf{Subcase C:} When $ |\Delta(z)\cap\Delta(y)| = 2 $.

\noindent Suppose $ z $ and $ y $ share a transposition. Then $ zy $ is a square element in $ A_n.$ Consequently, we are done.
\vspace{0.1cm}

\noindent Suppose $ (1\,\,2),~(3\,\,4) $ acts nontrivially on distinct transpositions of $ y$. Without loss of generality, let $ y = (1\,\,i_2)(3\,\,i_4)(i_5\,\,i_6)\cdots(i_{2r-1}\,\,i_{2r})$. Then $ zy = (1\,\,i_2\,\,2)(3\,\,i_4\,\,4)(i_5\,\,i_6)\cdots(i_{2r-1}\,\,i_{2r}).$ Consequently, $ zy $ is square element in $ A_n $ as $ r $ is even. Therefore, we are done.
\vspace{0.1cm}

\noindent Suppose one of $ (1\,\,2),~(3\,\,4)$ acts nontrivially on two transpositions of $ y $. Without loss of generality, let $ y = (1\,\,i_2)(2\,\,i_4)(i_5\,\,i_6)\cdots(i_{2r-1}\,\,i_{2r}).$ Then $ zy = (1\,\,i_2\,\,2\,\,i_4)(3\,\,4)(i_5\,\,i_6)\cdots(i_{2r-1}\,\,i_{2r})$. Let us consider the element $ w = (i_5\,\,i_7\,\,i_6\,\,i_8)[(1\,\,2)(3\,\,4)(i_2\,\,i_4)][(i_9\,\,i_{11})(i_{10}\,\,i_{12})\cdots(i_{2r-2}\,\,i_{2r})].$ The $ wy = (i_5\,\,i_8\,\,i_6\,\,i_7)[(1\,\,i_4)(2\,\,i_2)(3\,\,4)][(i_9\,\,i_{12})(i_{10}\,\,i_{11})\cdots(i_{2r-2}\,\,i_{2r-1})(i_{2r-3}\,\,i_{2r})].$ This shows that $ cl(w) = cl(wy) = cl(zy).$ Thus, we are done.
\vspace{0.1cm}

\noindent\textbf{Subcase D:} When $ |\Delta(z)\cap\Delta(y)| = 3 $.

\noindent Suppose $ z $ and $ y $ share a transposition. Without loss of generality, let $ y = (1\,\,2)(3\,\,i_4)(i_5\,\,i_6)\cdots(i_{2r-1}\,\,i_{2r}).$ Then $ zy = (3\,\,i_4\,\,4)(i_5\,\,i_6)\cdots(i_{2r-1}\,\,i_{2r}).$ Consequently, $ zy $ is a square in $ A_n.$ Therefore, we are done.  
\vspace{0.1cm}

\noindent Suppose $ z $ and $ y $ do not share a transposition such that $ z $ acts nontrivially on two transpositions of $ y$. Without loss of generality, let $ y = (1\,\,i_2)(2\,\,3)(i_5\,\,i_6)\cdots(i_{2r-1}\,\,i_{2r}) $. Then $ zy = (1\,\,i_2\,\,2\,\,4\,\,3)(i_5\,\,i_6)\cdots(i_{2r-1}\,\,i_{2r}).$ Consequently, $ zy $ is a square element in $ A_n $ as $ r $ is even and odd-order element is square in $A_n$. 

\vspace{0.1cm}

\noindent Suppose $ z $ and $ y $ do not share a transposition such that $ z $ acts nontrivially on three transpositions of $ y$. Without loss of generality, let $ y = (1\,\,i_2)(2\,\,i_4)(3\,\,i_6)(i_7\,\,i_8)\cdots(i_{2r-1}\,\,i_{2r}).$ Then $ zy = (1\,\,i_2\,\,2\,\,i_4)(3\,\,i_6\,\,4)(i_7\,\,i_8)\cdots (i_{2r-1}\,\,i_{2r}).$ Let us consider the element $ w = (1\,\,3)(2\,\,i_8\,\,i_6)(4\,\,i_7\,\,i_6\,\,i_2)[(i_9\,\,i_{11})\cdots(i_{2r-2}\,\,i_{2r})].$ Then $ wy = (3\,\,i_2)(2\,\,i_4\,\,i_8)(1\,\,4\,\,i_7\,\,i_6)[(i_9\,\,i_{12})(i_{10}\,\,i_{11})\cdots (i_{2r-2}\,\,i_{2r-1})(i_{2r-3}\,\,i_{2r})].$ This shows that $ cl(wy)=cl(w)=cl(zy).$ Consequently, we are done. 
\vspace{0.1cm}

\noindent\textbf{Subcase E:} When $ |\Delta(z)\cap\Delta(y)| = 4 $.

\noindent Suppose $z$ acts nontrivially on two transpositions of $ y $, then $ zy $ is an involution in $ A_n $. Consequently, $zy$ is a square element in $ A_n$. Therefore, we are done.
\vspace{0.1cm}

\noindent  Suppose $z$ acts nontrivially on three transpositions of $ y $ such that $ z $ and $ y $ share exactly one transpositions. Without loss of generality, let $ y = (1\,\,2)(3\,\,i_4)(4\,\,i_6)(i_7\,\,i_8)\cdots(i_{2r-1}\,\,i_{2r}).$ Then $ zy = (3\,\,i_4\,\,4\,\,i_6)(i_7\,\,i_8)\cdots(i_{2r-1}\,\,i_{2r}).$ Let us consider the element $ w = (1\,\,2)(3\,\,4\,\,i_4\,\,i_6)[(i_9\,\,i_{11})\cdots(i_{2r-2}\,\,i_{2r})].$ Then $ wy = (3\,\,i_6\,\,i_4\,\,4)(i_7\,\,i_8)[(i_9\,\,i_{12})(i_{10}\,\,i_{11})\cdots(i_{2r-2}\,\,i_{2r-1})(i_{2r-3}\,\,i_{2r})].$ This shows that $ cl( wy) = cl(w)=cl(zy).$ Consequently, we are done. 
\vspace{0.1cm}

\vspace{0.1cm}

\noindent Suppose $z$ acts nontrivially on four transpositions of $ y $. Without loss of generality, let $ y =(1\,\,i_2)(2\,\,i_4)(3\,\,i_6)(4\,\,i_8)(i_9\,\,i_{10})\cdots(i_{2r-1}\,\,i_{2r}).$ Then $ zy = (1\,\,i_2\,\,2\,\,i_4)(3\,\,i_6\,\,4\,\,i_8)(i_9\,\,i_{10})\cdots(i_{2r-1}\,\,i_{2r})$. Since $ r $ is even, therefore, $ zy $ is square element in $ A_n.$ Consequently, we are done.

\vspace{0.2cm}
\noindent\textbf{Case II:} Suppose there exists $ (i\,\,j)(k\,\,l)\in H $ such that $ (i\,\,k)(j\,\,l)\notin H $. 

\vspace{0.1cm}

\noindent This case reduces to a part of Case I in the proof of Theorem \ref{thm004}.

\vspace{0.1cm}

\noindent\noindent It follows from the above cases that our claim is justified. Consequently, $H$ is not a perfect subgroup code of $A_n$ in the Cayley sum graph due to Lemma \ref{lmn3.2} and this completes the proof.
\end{proof}  

\vspace{0.1 cm}

\begin{theorem}\label{thm5}
Let $ H $ be a nontrivial even order subgroup of $ S_n $ such that $ I_{min}(H)>2$. Then $ H $ is not a perfect subgroup code of $ S_n $ in the Cayley sum graph.
\end{theorem}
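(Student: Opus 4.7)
The plan is to apply Lemma \ref{lmn3.2} in the style of Theorems \ref{lmn3.3} and \ref{thm004}: I will exhibit a single coset $zH$ with $z\in S_n\setminus H$ and verify that each element $\sigma\in zH$ is either a square in $S_n$, satisfies $cl(\sigma)\cap H\neq\emptyset$, or satisfies $|cl(\sigma)\cap wH|>1$ for some $w\in S_n$. Fix an involution $x\in I_k(H)$ attaining $k=I_{min}(H)\geq 3$; after relabeling, write
\[
x=(1\,2)(3\,4)(5\,6)\cdots(2k-1\,2k).
\]
The natural choice is $z=(1\,3)(2\,4)$, which lies in $S_n\setminus H$ (since $H$ contains no double transposition), is a square in $S_n$, and commutes with $x$ because $(1\,3)(2\,4)$ commutes with $(1\,2)(3\,4)$ and is disjoint from the remaining transpositions of $x$. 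Consequently $zx=xz=(1\,4)(2\,3)(5\,6)\cdots(2k-1\,2k)$ is again a product of $k$ disjoint transpositions, so $zx\in cl(x)$ and $cl(zx)\cap H\ni x$.

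Writing $J(H)=\{1_{S_n},y_1,y_1^{-1},\ldots,y_m,y_m^{-1}\}$, the easy parts of $zH$ are handled first: the element $z\cdot 1_{S_n}=z$ is a square in $S_n$; the element $zx$ satisfies $cl(zx)\cap H\neq\emptyset$ as above; and for each non-involution pair $y_i,y_i^{-1}$, Lemma \ref{lm 1} gives $cl(zy_i)=cl(zy_i^{-1})$ while $zy_i\neq zy_i^{-1}$ both lie in $zH$, so the witness $w=z$ suffices. It remains to treat the involutions $y\in I(H)\setminus\{x\}$. The main lever is that $x\in H\cap C_{S_n}(z)$: conjugation by $x$ sends $zy$ to $x(zy)x^{-1}=z(xyx^{-1})$, which lies in $zH$ because $xyx^{-1}\in H$. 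Hence whenever $y$ does not commute with $x$, the distinct elements $zy$ and $z(xyx^{-1})$ of $zH$ belong to the same $S_n$-conjugacy class, furnishing the witness $w=z$.

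The principal obstacle lies in the remaining subcase, where $y\in I(H)\setminus\{x\}$ commutes with $x$. By Lemma \ref{lmn2.10}, such a $y$ is rigidly constrained relative to $x$: some of its transpositions coincide with transpositions of $x$, others are pairs $(i\,j)(k\,l)$ that swap two transpositions $(i\,k),(j\,l)$ of $x$, and the rest are disjoint from $\Delta(x)$. In this subcase I plan to argue by case analysis on $|\Delta(z)\cap\Delta(y)|\in\{0,1,2,3,4\}$, following the template of Theorem \ref{thm004}. In each pattern, either $zy$ is a square in $S_n$ because the number of even-length cycles has favourable parity, or I will construct a second element of $zH$ in the same $S_n$-class by conjugating $zy$ with another element of $H\cap C_{S_n}(z)$ built from transposition swaps inside $\Delta(x)$ and/or further involutions of $H$, or I will exhibit an element of $H$ sharing the cycle type of $zy$ via an explicit product of $x$ with suitable involutions of $H$. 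The bookkeeping parallels Theorem \ref{thm004} but is heavier, since $k\geq 3$ admits longer transposition strings and more overlap configurations.
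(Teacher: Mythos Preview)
Your setup is the same as the paper's: you choose $z=(1\,3)(2\,4)$, note $xz=zx$, dispose of the identity, of $zx$, of the non-involution pairs via Lemma~\ref{lm 1}, and reduce to involutions $y\in I(H)\setminus\{x\}$. Your separation into ``$xy\neq yx$'' (handled by conjugating $zy$ by $x$) versus ``$xy=yx$'' is in fact cleaner than the paper's organisation by $|\Delta(z)\cap\Delta(y)|$, and the non-commuting half is correct.

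The gap is in your plan for the commuting half. You propose only two tools: (a) find a second element of $zH$ in $cl(zy)$ by conjugating with some $g\in H\cap C_{S_n}(z)$; (b) find an element of $H$ in $cl(zy)$. Neither suffices in general. Take $n\ge 10$, $x=(1\,2)(3\,4)(5\,6)$, $y=(5\,6)(7\,8)(9\,10)$, and $H=\langle x,y\rangle=\{1,x,y,xy\}$, so $I_{\min}(H)=3$. Then $zy=(1\,3)(2\,4)(5\,6)(7\,8)(9\,10)$ has five disjoint transpositions, hence is not a square; but every element of $H$ commutes with both $z$ and $y$, so conjugation by anything in $H\cap C_{S_n}(z)=H$ fixes $zy$, and $zH=\{z,zx,zy,zxy\}$ contains no other element with five transpositions. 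Nor does $H$ itself contain a product of five transpositions. Thus both (a) and (b) fail for this $y$.

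What the paper does here---and what your plan is missing---is to invoke Lemma~\ref{lmn3.2}(ii) with a witness coset $wH$ that is neither $H$ nor $zH$. Concretely, one builds an auxiliary $w\in S_n$ (typically mixing transpositions from $\Delta(x)$ with transpositions from $\Delta(y)\setminus\Delta(x)$) so that two distinct elements $w\cdot 1$ and $w\cdot x$ (or $w\cdot 1$ and $w\cdot xy$) of $wH$ both land in $cl(zy)$. In the example above, $w=(1\,3)(2\,4)(5\,6)(7\,9)(8\,10)$ works: $w$ and $w(xy)=(1\,4)(2\,3)(5\,6)(7\,10)(8\,9)$ are distinct products of five transpositions in $wH$. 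The bulk of the paper's Case~I (and the analogous parts of Cases~III and~V) consists of manufacturing such $w$ and checking $\mathcal{N}(wz)=\mathcal{N}(wzx)=\mathcal{N}(zy)$ (or the variant with $xy$) across a parity-driven subcase split on $k$, $l$, $l'$, $r'$. Your sketch needs this external-witness mechanism to close the commuting case.
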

\begin{proof}
let $  I_{min}(H) = k $ and $ J(H) = \{ 1_{S_n}, y_1, y_1^{-1}, \dots, y_m, y_m^{-1}\}.$ It follows from Lemma \ref{lm 1} that $| cl(zy_i)\cap zH|>1$ for any $ z \in I(S_n)$, for $ 1\leq i \leq m $.

\vspace{0.1cm}

 \noindent We claim that for every non square element $\sigma \in zH $ for some $ z\in I(S_n\setminus H)$, there exists $ w \in S_n $ such that $ |cl(\sigma)\cap wH|>1 $ or $ cl(\sigma)\cap H \neq \emptyset $ or both.

\vspace{0.1cm} 
\noindent Suppose $ x \in I_{k}(H)$ and $ y \in I_r(H)$. Without loss of generality, let $ x = (1\,\,2)(3\,\,4) \cdots (2k-1\,\,2k) $.  Let $ z = (1\,\,3)(2\,\,4)$. Then $ z\notin H$ as $ k>2$. It is clear that $ xz = zx$. Consequently, $ cl(zy) = cl(zxyx) $.
\vspace{0.3cm}

\noindent Without loss of generality, let $ x = (1\,\,2)(3\,\,4)x_1x_2x_3 $,  where $ x_1 =  (j_1\,\,j_2)(j_3\,\,j_4)\cdots (j_{2l-1}\,\,j_{2l}) $,  $x_2 =  (t_1\,\,t_2)(t_3\,\,t_4)\cdots (t_{2l'-1}\,\,j_{2l'})$, and $ x_3 = (s_1 \,\,s_2)\cdots(s_{2l''-1}\,\,s_{2l''})$ such that transpositions of $ x_1$ are common transpositions of $ y $ and transpositions of $ x_3$ are not in $ y $ but $ \Delta(x_3)\subset \Delta(y)$ with $ x_3y = yx_3$. 
\noindent Here, \begin{equation}\label{eqn1}
        2+l+l' +  l''  = k. 
\end{equation} It follows from Lemma \ref{lmn2.10} that $ l''$ is always even. Let $ y\in I(H)$ such that $ \mathcal{N}(y) = r$. To proceed further, we consider the following cases.

 \vspace{0.17cm}
 
\noindent\textbf{Case I:} When $ |\Delta(z)\cap \Delta(y)| = 0.$

\noindent Without loss of generality, let $ y  = y_1y_2y_3$, where $ y_1 = [(j_1\,\,j_2)\cdots (j_{2l-1}\,\,j_{2l})]$, $y_2 = [(s_1 \,\,s_3)\cdots(s_{2l''-2}\,\,s_{2l''})]$, and $y_3 = [(i_1\,\,i_2)\cdots (i_{2r'-1}\,\,i_{2r'})]$ such that $ \{i_1, i_2,\dots,i_{2r'}\}\notin \Delta(x)$. Since $ y \in I_r(H),$ therefore, 
 \begin{equation}\label{eqn2}
        l+l''+r'=r. 
\end{equation}

\noindent It is clear that $zy\in I_{r+2}(S_n\setminus H).$ Consequently, $ \mathcal{N}(zy) = r+2.$
\vspace{0.1cm}

\noindent Suppose $ r$ is even, then $ zy$ is a square element in $S_n$. Consequently, we are done. 
\vspace{0.2cm}

\noindent Suppose $ r $ is odd. It is clear that $ l, l', l''\leq k $. We divide this case into four parts. 
\vspace{0.05cm}

\noindent \textbf{Subcase A:} When $ k $ is even and $ l$ is odd.

\noindent It follows from equation (\ref{eqn1}) that $ l' $ is odd. Let us consider the involution $ w = (1\,\,2)(3\,\,4)w_1w_2w_3w_4$, where $ w_1 = (j_1\,\,j_3)(j_2\,\,j_4)\cdots (j_{2l-4}\,\,j_{2l-2})$, $ w_2 = [(t_1\,\,t_3)(t_2\,\,t_4)\cdots (t_{2l'-4}\,\,t_{2l'-2})](t_{2l'-1}\,\,t_{2l'})$, $w_3 = (i_1\,\,i_2)(i_3\,\,i_4)\cdots (i_{2(r'-l'+1)-1}\,\,i_{2(r'-l'+1)})$, and $ w_4 = (s_1\,\,s_3)\cdots(s_{2l''-2}\,\,s_{2l''})$. If $ w \in H $, then we are done. Suppose $ w\notin H$. Now, $ wz = (1\,\,4)(2\,\,3)w_1w_2w_3w_4$ and $ \mathcal{N}(w) = 2+(l-1)+l'+ r'-l'+1 + l'' = r + 2 $ by equation (\ref{eqn2}). It is clear that $ cl(wz)=cl(w)$ as $ \mathcal{N}(wz) = \mathcal{N}(w)$. Now, $w(zx) = (1\,\,3)(2\,\,4)u_1u_2 u_3u_4$, where $ u_1 = [(j_1\,\,j_4)(j_2\,\,j_3)\cdots(j_{2l-5}\,\,j_{2l-2})(j_{2l-4}\,\,j_{2l-3})](j_{2l-1}\,\, j_{2l})$, $  u_2 = (t_1\,\,t_4)(t_2\,\,t_3)\cdots (t_{2l'-4}\,\,t_{2l'-3})(t_{2l'-5}\,\,t_{2l'-2})$, $  u_3 = w_3$, and $ u_4 = (s_1\,\,s_4)(s_2\,\,s_3)\cdots(s_{2l''-2}\,\,s_{2l''-1}) (s_{2l''-3}\,\,s_{2l''})$. Then $ \mathcal{N}(wzx) = 2+l +l'-1+r'-l'+1+l'' = r+2 = \mathcal{N}(zy).$ This shows that $ cl(wzx) = cl(wz) = cl(w) = cl(zy)$ and consequently, we are done.

\vspace{0.1cm}
\noindent \textbf{Subcase B:}  When $ k $  is even and $ l $ is even. 

\noindent Then it follows from equations (\ref{eqn1}) and (\ref{eqn2}) that $ l' $ is even and $ r' $ is odd as $ r$ is odd. Let us consider the involution $ w = (1\,\,2)(3\,\,4)w_1w_2w_3w_4$, where $ w_1 = (j_1\,\,j_3)(j_2\,\,j_4)\cdots (j_{2l-3}\,\,j_{2l-1})(j_{2l-2}\,\,j_{2l})$, $ w_2 = (t_1\,\,t_3)(t_2\,\,t_4)\cdots (t_{2l'-2}\,\,t_{2l'})$, $ w_3 = [(i_1\,\,i_2)\cdots (i_{2(r'-l')-1}\,\,i_{2(r'-l')})]$, and $ w_4 = (s_1\,\,s_3)\cdots(s_{2l''-2}\,\,s_{2l''})$. Then $  wz = (1\,\,4)(2\,\,3)w_1w_2w_3w_4$ and $ wzx = (1\,\,3)(2\,\,4)u_1u_2u_3u_4$, where $ u_1 = (j_1\,\,j_4)(j_2\,\,j_3)\cdots(j_{2l-2}\,\,j_{2l-1})(j_{2l-3}\,\,j_{2l})$, $ u_2 = (t_1\,\,t_4)(t_2\,\,t_3)\cdots(t_{2l'-2}\,\,t_{2l'-1})(t_{2l'-3}\,\,t_{2l'})$, $ u_3 = w_3 $, and $ u_4 = (s_1\,\,s_4)(s_2\,\,s_3)\cdots(s_{2l''-2}\,\,s_{2l''-1})(s_{2l''-3}\,\,s_{2l''})$. Then $ \mathcal{N}(wz) = 2 + l + l' + r'-l' + l'' =\mathcal{N}(zy).$ Similarly, we get $ \mathcal{N}(wzx) = \mathcal{N}(wz)$. Hence, $ cl(wz) = cl(wzx) = cl(zy)$. Consequently, we are done.

\vspace{0.1cm}
\noindent \textbf{Subcase C:} When $ k $ is odd and $ l $ is odd. 

\noindent Then it follows from equations (\ref{eqn1}) and (\ref{eqn2}) that $ l'$ is even and $ r'$ is odd as $ r$ is odd. Now, $ zxy = (1\,\,4)(2\,\,3)(t_1\,\,t_2)\cdots (t_{2l'-1}\,\,t_{2l'})(i_1\,\,i_2)\cdots (i_{2r'-1}\,\,i_{2r'})(s_1\,\,s_4)(s_2\,\,s_3)\cdots(s_{2l''-3}\,\,s_{2l''})$. 

\noindent Let us consider the involution $ w = (1\,\,2)(3\,\,4)w_1w_2w_3w_4 $, where $ w_1 = (j_1\,\,j_3)(j_2\,\,j_4)\cdots (j_{2l-5}\,\,j_{2l-3})(j_{2l-4}\,\,j_{2l-2})$, $ w_2 = (i_1\,\,i_2)(i_3\,\,i_4)\cdots(i_{2l'-1}\,\,i_{2l'}) $, $ w_3 = [(i_{2l'+1}\,\,i_{2l'+3})(i_{2l'+2}\,\,i_{2l'+4})\cdots(i_{2r'-4}\,\,i_{2r'-2})](i_{2r'-1}\,\,j_{2l-1})(i_{2r'}\,\,j_{2l}) $ and $ w_4 = x_3$. Then $ \mathcal{N}(w) = 2+l-1+r'+1 +l''= r+2 $ by equation (\ref{eqn2}). It is clear that $ \mathcal{N}(wz) = \mathcal{N}(w) = r+2.$ If $ w \in H $, then we are done. Suppose $ w\notin H $. Now, $ w(zxy) = (1\,\,3)(2\,\,4)u_1u_2u_3u_4$, where $ u_1 = (j_1\,\,j_3)(j_2\,\,j_4)\cdots(j_{2l-5}\,\,j_{2l-3})(j_{2l-4}\,\,j_{2l-2})$, $ u_2 = [(i_{2l'+1}\,\,i_{2l'+4})(i_{2l'+2}\,\,i_{2l'+3})\cdots (i_{2r'-4}\,\,i_{2r'-3}) (i_{2r'-5}\,\,i_{2r'-2})](i_{2r'}\,\,j_{2l-1})(i_{2r'-1}\,\,j_{2l})$, $ u_3 = (t_1\,\,t_2)(t_3\,\,t_4)\cdots(t_{2l'-1}\,\,t_{2l'})$, and $ u_4 = (s_1\,\,s_3)(s_2\,\,s_4)\cdots(s_{2l''-2}\,\,s_{2l''})$. Then $ \mathcal{N}(wzxy) = 2+l-1+(r'-l'+1)+l'+l'' =2+l+r'+l'' = 2 + r $ by equation (\ref{eqn2}). This shows that $ cl(zy) = cl(wz) = cl(wzxy).$ Therefore, we are done. 

\vspace{0.1cm}
\noindent \textbf{Subcase D:} When $ k $ is odd and $ l $ is even. 

\noindent Then it follows from equations (\ref{eqn1}) and (\ref{eqn2}) that $ l'$ is odd and $ r'$ is odd. Let us consider the involution $ w = (1\,\,2)(3\,\,4)w_1w_2w_3w_4$, where $ w_1 = (j_1\,\,j_3)(j_2\,\,j_4)\cdots (j_{2l-3}\,\,j_{2l-1})(j_{2l-2}\,\,j_{2l})$, $ w_2 = (i_1\,\,i_2)(i_3\,\,i_4)\cdots(i_{2l'-1}\,\,i_{2l'}) $, $ w_3 = (i_{2l'+1}\,\,i_{2l+3})(i_{2l+2}\,\,i_{2l+4})\cdots(i_{2r'-3}\,\,i_{2r'-1})(i_{2r'-2}\,\,i_{2r'}) $, and $ w_4 = (s_1\,\,s_3)\cdots(s_{2l''-2}\,\,s_{2l''})$. Then $ wz = (1\,\,4)(2\,\,3)w_1w_2w_3w_4$ and $ \mathcal{N}(w) = 2+l+r'+l'' = r+2 $ by equation (\ref{eqn2}). If $ w\in H $, then we are done. Suppose $ w\notin H$. It is easy to see that $ \mathcal{N}(wz) = \mathcal{N}(w) = r+2 = \mathcal{N}(zy)$. Now, $ w(zxy) = (1\,\,3)(2\,\,4)u_1u_2u_3u_4$, where $ u_1 = (j_1\,\,j_3)(j_2\,\,j_4)\cdots (j_{2l-2}\,\,j_{2l})$, $ u_2 = (i_{2l'+1}\,\,i_{2l'+4})(i_{2l'+2}\,\,i_{2l'+3})\cdots(i_{2r'-2}\,\,i_{2r'-1})(i_{2r'-3}\,\,i_{2r'})$, $ u_3 = (t_1\,\,t_2)(t_3\,\,t_4)\cdots(t_{2l'-1}\,\,t_{2l'})$, and $ u_4 = (s_1\,\,s_2)\cdots(s_{2l''-1}\,\,s_{2l''})$ Then $ \mathcal{N}(wzxy) = 2+l+l'+l''+r'-l' = 2+r $ by equation (\ref{eqn2}). This shows that $ cl(zy)= cl(wzxy)=cl(wz)$ and consequently, we are done. 

\vspace{0.1cm}
\noindent\textbf{Case II:} When $  |\Delta(z)\cap \Delta(y)| = 1 .$

\noindent Without loss of generality, let $ i_1 = 1 $. Now, $ xyx^{-1} = (x(i_1)\,\,x(i_2))(x(i_3) \,\, x(i_4)) \cdots (x(i_{2r-1})\,\, x(r_{2r})).$ Suppose $ xyx^{-1} = y $. Now, $ (xyx^{-1})(1) = y (1) = i_2 $. This implies that $xy(2) = i_2 $. Consequently, $ x(2) = i_2 $. Then $ 1 = i_1 = i_2 $, which is contradiction as $ xy = yx.$ Hence, $ xyx^{-1} \neq x $. Thus, $ zy \neq z(xyx)$. Let $zy\in cl(g) $ for some $ g\in S_n $. Then $ x(zy)x^{-1} \in cl(g)$. Since $ xz = zx $, therefore, $ z(xyx) \in cl(g)$. Consequently, $ cl(zy) = cl(zxyx).$ Therefore, we are done.

\vspace{0.1cm}
\noindent\textbf{Case III:} When $  |\Delta(z)\cap \Delta(y)| = 2 .$

\noindent Suppose two transpositions of $ z $ act non trivially on two transpositions of $ y $. Without loss of generality, let $ y =(1\,\,i_2)(2\,\,i_4)(i_5\,\,i_6)\cdots(i_{2r-1}\,\,i_{2r}).$ Since $ (1\,\,2)(3\,\,4)(1\,\,i_2)(2\,\,i_4) \neq (1\,\,i_2)(2\,\,i_4)(1\,\,2)(3\,\,4)$, therefore, $ xyx \neq y.$ Let $zy\in cl(g) $ for some $ g\in S_n $. Then $ x(zy)x^{-1} \in cl(g)$. Since $ xz = zx $, therefore, $ z(xyx) \in cl(g)$. Consequently, $ cl(zy) = cl(zxyx).$ Therefore, we are done. 

\vspace{0.1cm}

\noindent Suppose one transposition of $ z $ acts non trivially on two transpositions of $ y $. Without loss of generality, let $ y =(1\,\,i_2)(3\,\,i_4)(i_5\,\,i_6)\cdots(i_{2r-1}\,\,i_{2r}).$ Since $ (1\,\,2)(3\,\,4)(1\,\,i_2)(3\,\,i_4) \neq (1\,\,i_2)(3\,\,i_4)(1\,\,2)(3\,\,4)$, therefore, $ xyx \neq y.$ Let $zy\in cl(g) $ for some $ g\in S_n $. Then $ x(zy)x^{-1} \in cl(g)$. Since $ xz = zx $, therefore, $ z(xyx) \in cl(g)$. Consequently, $ cl(zy) = cl(zxyx).$ Therefore, we are done.

\vspace{0.1cm}

\noindent Suppose $ z $ and $ y $ share exactly one common transposition. Then $ cl(zy) = cl(y)$ as $ \mathcal{N}(z) = 2 $. Consequently, we are done. 
\vspace{0.1cm}

\begin{equation}\label{eqn3}
        1+l+l''+r'=r. 
\end{equation}

\vspace{0.1cm}
\noindent Now, $ zy = (1\,\,4\,\,2\,\,3)y_1y_2y_3$. Then $ \mathcal{N}(zy)= r-1.$ To proceed further, we consider the following subcases.
\vspace{0.1cm}

\noindent \textbf{Subcase A:} When $ k $ is even.

\noindent Suppose $ l $ is odd. Then it follows from equation (\ref{eqn1}) that $ l' $ is odd. Let us consider the involution $ w = (1\,\,2)w_1w_2w_3 w_4w_5$, where $ w_1 = (j_1\,\,j_3)(j_2\,\,j_4)\cdots (j_{2l-5}\,\,j_{2l-3})(j_{2l-4}\,\,j_{2l-2})$, $ w_2 = [(t_1\,\,t_3)(t_2\,\,t_4)\cdots (t_{2l'-4}\,\,t_{2l'-2})]$, $ w_3 = (j_{2l-1}\,\,t_{2l'-1})(j_{2l}\,\,t_{2l'})$, $ w_4 = (i_1\,\,i_2)(i_3\,\,i_4)\cdots (i_{2(r'-l')-1}\,\,i_{2(r'-l')} )$, and $ w_5 = (s_1\,\,s_3)(s_2\,\,s_4)\cdots (s_{2l''-2}\,\,s_{2l''})$. If $ w \in H $, then we are done. Now, $ \mathcal{N}(w) = 1+(l-1)+(l'-1)+2+ r'-l'+l'' = r $ by equation (\ref{eqn2}). Now, $ wz = (1\,\,3\,\,2\,\,4)w_1w_2w_3w_4w_5$ and $ w(zx) = (1\,\,4\,\,2\,\,3)u_1u_2u_3u_4u_5$, where $ u_1 = (j_1\,\,j_4)(j_2\,\,j_3)\cdots(j_{2l-5}\,\,j_{2l-2})(j_{2l-4}\,\,j_{2l-3})$, $ u_2 = (t_1\,\,t_4)(t_2\,\,t_3)\cdots (t_{2l'-5}\,\,t_{2l'-2})(t_{2l'-4}\,\,t_{2l'-3})$, $ u_3 = (j_{2l-1}\,\,t_{2l'})(j_{2l}\,\,t_{2l'-1}) $, $ u_4 = w_4$, and $ u_5 = (s_1\,\,s_4)(s_2\,\,s_3)\cdots(s_{2l''-3}\,\,s_{2l''})$. Then $ \mathcal{N}(wz) = (l-1)+(l'-1)+2+(r'-l')+l'' = l+r'+l'' = r-1 $ by equation (\ref{eqn3}) and $ \mathcal{N}(wzx) = (l-1)+(l'-1)+2+(r'-l')+l'' = l+r'+l'' = r-1 $ by equation (\ref{eqn3}). Thus, $ cl(wz) = cl(wzx) $ as $ wz$ and $ wzx $ contains exactly one $ 4 $ cycle. Therefore, we are done.

\vspace{0.1cm}

\noindent Suppose $ l $ is even. Then it follows from equation (\ref{eqn1}) that $ l' $ is even. Let us consider the involution $w = (1\,\,2)w_1w_2w_3w_4 $, where $ w_1 = (j_1\,\,j_3)\cdots (j_{2l-2}\,\,j_{2l})$, $ w_2 = (t_1\,\,t_3)\cdots (t_{2l'-2}\,\,t_{2l'})$, $ w_3 = (i_1\,\,i_2)(i_3\,\,i_4)\cdots (i_{2(r'-l')-1}\,\,i_{2(r'-l')} )$, and $w_4 = (s_1\,\,s_3)(s_2\,\,s_4)\cdots (s_{2l''-2}\,\,s_{2l''})$. Then $ wz = (1\,\,3\,\,2\,\,4)w_1w_2w_3w_4$ and $\mathcal{N}(wz) = l+l'+r'-l'+l'' = \mathcal{N}(zy)$. Now, $ w(zx) =(1\,\,4\,\,2\,\,3)u_1u_2u_3$, where $ u_1 = (j_1\,\,j_4)(j_2\,\,j_3)\cdots (j_{2l-2}\,\,j_{2l-1})(j_{2l-3}\,\,j_{2l})$, $ u_2 = (t_1\,\,t_4)(t_2\,\,t_3)\cdots (t_{2l'-2}\,\,t_{2l'-1})(t_{2l'-3}\,\,t_{2l'})$, $ u_3 = w_3$, and $ u_4 = (s_1\,\,s_4)(s_2\,\,s_3)\cdots (s_{2l''-2}\,\,s_{2l''-1})(s_{2l''-3}\,\,s_{2l''}).$ Then $ \mathcal{N}(wzx) = l+l'+(r'-l')+l'' = \mathcal{N}(zy).$ This shows that $ cl(wzx)=cl(zy)=cl(wz)$ as $ wzx,~wz,~zy $ contains exactly one $ 4$ cycle. Therefore, we are done.

\vspace{0.1cm}

\noindent \textbf{Subcase B:} When $ k $ is odd.

\noindent Suppose $ r $ is odd and $ l $ is even. Then it follows from equations (\ref{eqn1}) and (\ref{eqn3}) that $ l' $ is odd and $ r'$ is even.  Now, consider the element $ w = (1\,\,2)(3\,\,4)w_1w_2w_3w_4w_5$, where $ w_1 = [(j_1\,\,j_2)\cdots(j_{2l-1}\,\,j_{2l})]$, $w_2 = [(i_1\,\,i_2)\cdots (i_{2l'-3}\,\,i_{2l'-2})]$, $w_3=[(i_{2l'-1}\,\,i_{2l'}\,\,i_{2l'+1}\,\,i_{2l'+2})]$, $w_4=[(i_{2l'+3}\,\,i_{2l'+5})\cdots (i_{2r'-2}\,\,i_{2r'})]$, and $ w_5 = (s_1\,\,s_2)\cdots(s_{2l''-1}\,\,s_{2l''})$. Now, $ \mathcal{N}(w) = 2+l+(r'-2)+l''= r-1$ by equation (\ref{eqn3}). It is clear that $ cl(wz)= cl(w)=cl(zy).$ Now, $ w(zxy) = (1\,\,4\,\,2\,\,3)u_1u_2u_3u_4u_5$ where $ u_1 = (t_1\,\,t_2)\cdots(t_{2l'-1}\,\,t_{2l'})$, $ u_2 = (j_1\,\,j_2)\cdots(j_{2l-1}\,\,j_{2l})$, $ u_3 = (i_{2l'-1}\,\,i_{2l'+1})$, $ u_4 = (i_{2l'+3}\,\,i_{2l'+6})(i_{2l'+4}\,\,i_{2l'+5})\cdots(i_{2r'-2}\,\,i_{2r'-1})(i_{2r'-3}\,\,i_{2r'})$, and $ u_5 = (s_1\,\,s_3)(s_2\,\,s_4)\cdots (s_{2l''-2}\,\,s_{2l''})$. Then $ \mathcal{N}(wzxy)= l'+l+1+ r'-(l'+1)+l'' = r-1$ by equation (\ref{eqn3}). This shows that $ cl(wz) = cl(wzxy) = cl(zy).$ Hence, we are done.

\vspace{0.1cm}

\noindent Suppose $ l $ is odd and $ r $ is even. Then it follows from the equations (\ref{eqn1}) and (\ref{eqn3}) that $l'$ and $ r'$ are even. It follows from equations (\ref{eqn1}) and (\ref{eqn3}) that $ r'>0$ as $ k\leq r$. Let us consider the element $ w = (1\,\,3\,\,2\,\,i_1)(4\,\,i_2)w_1w_2w_3$, where $ w_1 = (j_1\,\,j_3)\cdots(j_{2l-4}\,\,j_{2l-2})$, $ w_2 = (j_{2l-1}\,\,i_{2r'-1})(j_{2l}\,\,i_{2r'})$, and $ w_3 = (i_3\,\,i_5)\cdots(i_{2r'-4}\,\,i_{2r'-2})$. Then $ \mathcal{N}(w) = 1+l-1+2+r'-2 = r-1 $ by equation (\ref{eqn3}). If $ w \in H $, then we are done. Suppose $ w\notin H$. Then $ wy = (1\,\,i_1\,\,4\,\,i_2)(2\,\,3)u_1u_2u_3 $, where $ u_1 = (j_1\,\,j_4)(j_2\,\,j_3)\cdots(j_{2l-4}\,\,j_{2l-3})(j_{2l-5}\,\,j_{2l-2})$, $ u_2 = (j_{2l-1}\,\,i_{2r'})(j_{2l}\,\,i_{2r'-1})$, and $ u_3 = (i_3\,\,i_6)(i_4\,\,i_5)\cdots(i_{2r'-4}\,\,i_{2r'-3})(i_{2r'-5}\,\,i_{2r'-2})$. This implies that $ cl(wy) = cl(zy) = cl(w)$. Consequently, we are done.

\vspace{0.1cm}

\noindent\textbf{Case IV:} When $  |\Delta(z)\cap \Delta(y)| = 3 .$

\noindent Then there exists a transposition $(a_1\,\,a_2) $ of $ y $ such that $ a_1 \in\Delta(z) $ and $ a_2\notin \Delta(z)$. Without loss of generality, let $ \Delta(z)\cap \Delta(y) = \{1,\,\,2,\,\,3\}$, $ a_1 = 3 $ and $ y = (i_1\,\,i_2)\cdots(i_{2r-1}\,\,i_{2r})$. It implies that $ y(4) = 4 $. Suppose $ xyx^{-1} = y.$ Then $  (x(i_1)\,\,x(i_2))(x(i_3) \,\, x(i_4)) \cdots (x(i_{2r-1})\,\, x(i_{2r})) = y $ due to Lemma \ref{lmn2.9}. It implies that $ x(a_1) = i_{r'}$ for some $ r'\in \{1,2,\dots,2r-1\}$. Consequently, $x(a_2) = i_{r'+1}$. Then $ x(3) = i_{r'} = 4 $ by definition of $ x $. Now, $ y(i_r') = i_{r'+1}$. Then $ y(4) = i_{r'+1}.$ Consequently, $ 4 = i_{r'+1} = i_r' $ as $ y(4) = 4 $. Thus, we arrive a contradiction as $ i_r'\neq i_{r'+1} $. Hence, $ xyx \neq y $. Therefore, $ zy \neq z(xyx)$. Let $zy\in cl(g) $ for some $ g\in S_n $. Then $ x(zy)x^{-1} \in cl(g)$. Since $ xz = zx $, therefore, $ z(xyx) \in cl(g)$. Consequently, $ cl(zy) = cl(zxyx).$ Therefore, we are done.

\vspace{0.1cm}

\noindent\textbf{Case V:} When $ |\Delta(z)\cap \Delta(y)| = 4.$

\noindent Suppose $ y $ contains one of elements $(1\,\,4)(2\,\,3)$, $  (1\,\,2)(3\,\,4)$. Then $ cl(zy) = cl(y)$. Consequently, we are done. 

\noindent Suppose $ z $ acts nontrivially atleast three transpositions of $ y $. Then $ xy\neq yx $ due to Lemma \ref{lmn2.10}. Consequently, $ zy $ and $ zxyx $ are distinct elements in $ zH$ with $ cl(zy) = cl(zxyx)$ as $ xz = zx $. Therefore, we are done. 

\vspace{0.1cm}

\noindent Suppose $ x $ and $ y $ share $ l $ transpositions. 
\noindent Without loss of generality, let $ y = (1\,\,3)(2\,\,4)y_1y_2y_3$, where $ y_1 = (j_1\,\,j_2)\cdots(j_{2l-1}\,\,j_{2l})$, $y_3=(s_1\,\,s_3)\cdots(s_{2l''-2}\,\,s_{2l''})$, and $y_3 = (i_1\,\,i_2)\cdots(i_{2r'-1}\,\,i_{2r'})$. Here, \begin{equation}\label{eqn4}
    2+l+l''+r' = r.
\end{equation} Then $ zy = (j_1\,\,j_2)\cdots(j_{2l-1}\,\,j_{2l})(s_1\,\,s_3)\cdots(s_{2l''-2}\,\,s_{2l''})(i_1\,\,i_2)\cdots(i_{2r'-1}\,\,i_{2r'})$ and $\mathcal{N}(zy) = r-2 $ by equation (\ref{eqn4}). If $ r $ is even, then $ zy $ is square element. Consequently, we are done. 

\vspace{0.1cm}

\noindent Suppose $ r $ is odd.
To proceed further, we consider the following sub-cases.

\vspace{0.1cm}

\noindent\textbf{Subcase A:} When $ k $ is odd. 

\noindent Let us the consider the involution $ w = (1\,\,3)(2\,\,4)w_1w_2w_3w_4$, where $ w_1 = (j_1\,\,j_2)\cdots(j_{2l-1}\,\,j_{2l})$, $w_2 = (t_1\,\,t_2)\cdots (t_{2l'-1}\,\,t_{2l'})$, $w_3 = (i_1\,\,i_2)(i_3\,\,i_4)[(i_5\,\,i_7)\cdots(i_{2(r'-l')-2}\,\, i_{2(r'-l')})]$, and $ w_4 = (s_1\,\,s_2)\cdots(s_{2l''-1}\,\,s_{2l''})$. Then $ wz = w_1w_2w_3w_4$. Now, $\mathcal{N}(wz) = l+l'+(r'-l')+l'' = \mathcal{N}(zy)$. Consequently, $ cl(wz) = cl(zy).$ Now, $ w(zxy) = (1\,\,4)(2\,\,3) u_1u_2u_3 u_4$, where $ u_1 = [(j_1\,\,j_2)\cdots(j_{2l-1}\,\,j_{2l})]$, $ u_2 = [(i_5\,\,i_8)(i_6\,\,i_7)\cdots (i_{2(r'-l')-3}\,\,i_{2(r'-l')})]$, $ u_3 = [(i_{2(r'-l')+1}\,\,i_{2(r'-l')+2})\cdots(i_{2r'-1}\,\,i_{2r'})]$, and $ u_4 = (s_1\,\,s_4)(s_2\,\,s_3)\cdots (s_{2l''-2}\,\,s_{2l''-1})(s_{2l''-3}\,\,s_{2l''})$. Then $ \mathcal{N}(wzxy) = 2+l +r'-2+l'' = \mathcal{N}(zy) = \mathcal{N}(wz).$ Consequently, $ cl(wzxy) = cl(zy) = cl(wz)$. Therefore, we are done.   

\vspace{0.1cm}

\noindent\textbf{Subcase B:} When $ k $ is even.

\noindent Suppose $ l $ is odd. Then it follows from equations $(\ref{eqn1})$ and $ (\ref{eqn4})$ that $ l '$ is odd and $ r' $ is even. Let us consider the involution $ w = (1\,\,3)(2\,\,4)w_1w_2w_3w_4$, where $ w_1 = [(j_1\,\,j_3)(j_2\,\,j_4)\cdots (j_{2l-5}\,\,j_{2l-3})(j_{2l-4}\,\,j_{2l-2})](j_{2l-1}\,\,j_{2l})$, $w_2 = [(t_1\,\,t_3)\cdots (t_{2l'-4}\,\,t_{2l'-2})](t_{2l'-1}\,\,t_{2l'})$, $w_3 = [(i_1\,\,i_2)(i_3\,\,i_4)\cdots(i_{2(r'-l')-1}\,\,i_{2(r'-l')})]$, and $ w_4 = [(s_1\,\,s_3)(s_2\,\,s_4)\cdots (s_{2l''-2}\,\,s_{2l''})]$. Then $ wz = w_1w_2w_3w_4$ and $ \mathcal{N}(wz) = l+l'+r'-l' +l''= \mathcal{N}(zy)$. Consequently, $ cl(wz) = cl(zy)$. Now, $ w(zx) = (1\,\,2)(3\,\,4)u_1u_2u_3u_4$, where $ u_1 = [(j_1\,\,j_4)(j_2\,\,j_3)\cdots (j_{2l-5}\,\,j_{2l-2})]$, $ u_2 = [(t_1\,\,t_4)(t_2\,\,t_3)\cdots (t_{2l'-5}\,\,t_{2l'-2})]$, $ u_3 = w_3$, and $u_4 = (s_1\,\,s_4)(s_2\,\,s_3)\cdots(s_{2l''-2}\,\,s_{2l''-1})(s_{2l''-3}\,\,s_{2l''})$. Then $ \mathcal{N}(wzx) = 2+l-1+l'-1+r'-l'+l'' = \mathcal{N}(zy)$. Consequently, $ cl(wzx) = cl(zy) = cl(wz).$ Therefore, we are done.

\vspace{0.1cm}

\noindent Suppose $ l $ is even. Then it follows from equations $(\ref{eqn1})$ and $ (\ref{eqn4})$ that $ l '$ is even and $ r' $ is odd. Let us consider the involution $ w = (1\,\,3)(2\,\,4)w_1w_2w_3w_4$, where $ w_1 = (j_1\,\,j_2)(j_3\,\,j_4)[(j_5\,\,j_7)(j_6\,\,j_8)\cdots (j_{2l-3}\,\,j_{2l-1})(j_{2l-2}\,\,j_{2l})]$, $w_2 = [(t_1\,\,t_3)\cdots (t_{2l'-2}\,\,t_{2l'})]$, $w_3 = [(i_1\,\,i_2)(i_3\,\,i_4)\cdots(i_{2(r'-l')-1}\,\,i_{2(r'-l')})]$, and $ w_4 = (s_1\,\,s_3)(s_2\,\,s_4)\cdots (s_{2l''-3}\,\,s_{2l''-1})(s_{2l''-2}\,\,s_{2l''})$. Then $ wz = w_1w_2w_3w_4$ and $ \mathcal{N}(wz) = l+l'+r'-l'+l''= \mathcal{N}(zy)$. Consequently, $ cl(wz) = cl(zy)$. Now, $ w(zx) = (1\,\,2)(3\,\,4)u_1u_2u_3u_4$, where $ u_1 = [(j_5\,\,j_8)(j_6\,\,j_7)\cdots (j_{2l-2}\,\,j_{2l-1})(j_{2l-3}\,\,j_{2l})]$, $ u_2 = [(t_1\,\,t_4)(t_2\,\,t_3)\cdots (t_{2l'-3}\,\,t_{2l'})]$, $ u_3 = w_3$, and $ u_4 = (s_1\,\,s_4)(s_2\,\,s_3)\cdots(s_{2l''-2}\,\,s_{2l''})$. Then $ \mathcal{N}(wzx) = 2+l-2+l'+r'-l'+l'' = \mathcal{N}(zy)$. Consequently, $ cl(wzx) = cl(zy) = cl(wz)$ and this concludes the case.

\vspace{0.1cm}

\noindent It follows from the above cases that our claim is justified. Consequently, $H$ is not a perfect subgroup code of $S_n$ in the Cayley sum graph due to Lemma \ref{lmn3.2}, and this completes the proof.
\end{proof}
As a consequence of the aforesaid result, we present the following corollary whose proof is omitted as it is akin to the proof of Theorem \ref{thm5}.
\begin{cor}\label{col6}
Let $ H $ be a nontrivial even order subgroup of $ A_n $ such that $ I_{min}(H) > 2$. Then $ H $ is not a perfect subgroup code of $ A_n $ in the Cayley sum graph.
\end{cor}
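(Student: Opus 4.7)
The plan is to apply Lemma \ref{lmn3.2} with $G = A_n$, following essentially the same strategy as in Theorem \ref{thm5} but with every witness chosen from $A_n$. Set $z = (1\,\,2)(3\,\,4)$; this is an involution of $A_n$, and since $I_{min}(H) > 2$ we have $z \notin H$. Writing $J(H) = \{1_{S_n}, y_1, y_1^{-1}, \dots, y_m, y_m^{-1}\}$, Lemma \ref{lm 1} immediately yields $|cl(zy_i) \cap zH| > 1$ for each $1 \leq i \leq m$, so it suffices to handle the non-square elements $\sigma = zy \in zH$ with $y$ an involution in $H$.

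The key structural simplification over Theorem \ref{thm5} is that $H \subseteq A_n$ forces every $y \in I_r(H)$ to have $r$ even; in particular $I_{min}(H) \geq 4$, so $n \geq 8$. Fix $x \in I_k(H)$ with $k = I_{min}(H)$. I would then carry out the same case analysis as in Theorem \ref{thm5}, indexed by $|\Delta(z) \cap \Delta(y)| \in \{0,1,2,3,4\}$ and refined by the interaction of $x$ with $y$ (common transpositions, non-trivial action on a transposition of $y$, or disjoint action). In each subcase, either $zy$ reduces to a product of $3$-cycles and an even number of disjoint transpositions, in which case $zy$ is already a square in $A_n$ (for $n \geq 8$ the relevant $S_n$-square is realised inside $A_n$ by absorbing the parity correction into otherwise unused points), and no witness is needed; or one must exhibit an involution $w \in A_n$ for which $cl(w) = cl(wy) = cl(zy)$ with $w, wy \in wH$, or for which $cl(zy) \cap H \neq \emptyset$.

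In the latter situation I would reuse the explicit witnesses $w$ from the corresponding subcases of Theorem \ref{thm5}, but verify that $\mathcal{N}(w)$ is even so that $w \in A_n$. This is the main obstacle. The witnesses there are built out of blocks $w_1, w_2, w_3, w_4$ whose transposition counts are tied to $k, l, l', l'', r, r'$ by the bookkeeping equations $(\ref{eqn1})$--$(\ref{eqn4})$, and the parity of $\mathcal{N}(w)$ is determined by those of $k, l, l', l''$. Because $r$ is forced to be even, only some of the parity patterns appearing in Theorem \ref{thm5} are realised, which collapses several subcases; for the remaining ones a straightforward parity check shows that each witness is either already in $A_n$, or can be adjusted by composing with one extra pair of commuting transpositions supported on two unused points (available since $n \geq 8$) without disturbing the conjugacy identity used to conclude. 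Assembling the subcases yields condition (ii) of Lemma \ref{lmn3.2} for $G = A_n$, and the result follows.
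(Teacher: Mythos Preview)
Your plan matches the paper's: the paper omits the proof and says only that it is ``akin to the proof of Theorem~\ref{thm5}'', so rerunning that case analysis with every auxiliary element chosen in $A_n$ is precisely what is intended. Your observation that $H\subseteq A_n$ forces $r$ and $k=I_{\min}(H)$ to be even (hence $k\ge 4$, $n\ge 8$, and the ``$k$ odd'' subcases of Theorem~\ref{thm5} disappear) is the right structural simplification.

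There is, however, one concrete step that fails as written. Your parity--correction claim, that an $S_n$-square root of $zy$ can always be pushed into $A_n$ by appending a transposition on ``two unused points (available since $n\ge 8$)'', is false in general. In Case~I ($|\Delta(z)\cap\Delta(y)|=0$) one has $|\Delta(zy)|=2r+4$; if $n=2r+4$ and $r\equiv 0\pmod 4$, then every square root of $zy$ in $S_n$ is a product of $(r+2)/2$ four-cycles on the full support, which is odd, and there are no spare points. So $zy$ is genuinely non-square in $A_n$ and you cannot dispense with a witness there. The same obstruction can occur in Case~V.

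The repair is not to look for unused points but to notice that the explicit witnesses $w$ constructed in Theorem~\ref{thm5} for $r$ odd remain well-defined when $r$ is even (the index ranges still make sense since $r'-l'=r-k+2\ge 2$), and a direct count gives $\mathcal{N}(w)=r+2$ in Case~I and $\mathcal{N}(w)=r-2$ in Case~V, both even once $r$ is even; hence $w\in A_n$ automatically and no correction is required. Replacing your unused-points appeal by this observation closes the argument.
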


It is easy to see that the trivial subgroup $\{1_{S_n}\}$ is not a perfect subgroup code of $ S_n $ as well as $ A_n $ in the Cayley sum graph. We now present the main theorems of this section, which follow from the preparatory work in the preceding results. Our principal result completely characterize perfect subgroup codes of $ S_n $ and $ A_n $  which concludes this section.
\begin{theorem}
The only perfect subgroup code of $S_n$ (or $A_n$) in the Cayley sum graph is $S_n$ (or $ A_n$) itself. 
\end{theorem}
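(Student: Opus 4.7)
The plan is to assemble the final theorem directly from the preceding results, so the proof reduces to exhibiting the forward direction and then a case analysis exhausting every proper subgroup. The first step is to verify that $S_n$ (respectively $A_n$) is itself a perfect subgroup code. Taking the connecting set $S = \emptyset$, the Cayley sum graph is edgeless on the vertex set $G$, and the whole vertex set is trivially an independent perfect code since its complement is empty. Hence $G$ is a perfect subgroup code of itself, and it remains to show that no proper subgroup of $G$ qualifies.

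For $H\leq S_n$ proper and nontrivial, I would split on the parity of $|H|$. The trivial subgroup $\{1_{S_n}\}$ is ruled out by the observation immediately preceding the theorem statement. If $|H|$ is odd then Theorem \ref{lmn3.3} applies. If $|H|$ is even, Cauchy's theorem furnishes an involution in $H$, so $I_{\min}(H) \geq 1$, and the three earlier theorems corresponding respectively to $I_{\min}(H) = 1$, $I_{\min}(H) = 2$, and $I_{\min}(H) > 2$ jointly exhaust the remaining possibilities.

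For $H\leq A_n$ proper and nontrivial, the odd-order case is settled by Corollary \ref{col1 }. In the even-order case I would invoke the fact that every involution in $A_n$ is a product of an even number of disjoint transpositions, so $I_{\min}(H)$ must be an even positive integer. The value $I_{\min}(H) = 2$ is handled by the dedicated theorem for $A_n$ just proved, while $I_{\min}(H) \geq 4$ is covered by Corollary \ref{col6}. Since the trivial subgroup has again been excluded, this accounts for every proper nontrivial $H \leq A_n$.

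The main obstacle is not mathematical difficulty but rather the exhaustiveness of the case list. The crucial observation is the parity restriction on involutions in $A_n$, which guarantees that no value of $I_{\min}(H)$ other than the positive even integers can occur for $H \leq A_n$; this is precisely what allows the two $A_n$ results to completely cover the even-order scenario without gaps. Once this bookkeeping is verified, the theorem is simply the conjunction of all the prior structural results together with the trivial check that the whole group is a perfect subgroup code of itself.
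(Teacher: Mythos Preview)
Your proposal is correct and matches the paper's approach exactly: the paper presents the final theorem without an explicit proof, stating only that it follows from the preparatory results, and you have made that implicit case analysis explicit. The one-line verification that $G$ itself is a perfect subgroup code via $S=\emptyset$, the parity split on $|H|$, and the exhaustive enumeration by $I_{\min}(H)$ (with the parity observation on involutions in $A_n$ closing the $A_n$ case) are precisely the bookkeeping the paper intends.
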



\end{document}